% ************* preamble *************************
\documentclass[12pt,reqno]{amsart}

\usepackage{palatino}
\usepackage[utf8]{inputenc}
\usepackage[LGR,T1]{fontenc}
\usepackage{comment}

\title[Reciprocal classes of random walks on graphs]{Reciprocal classes of random walks on graphs}
\date{, 2015}

\author{Giovanni Conforti}
\address{Institut für Mathematik der Universität Leipzig.Augustus Platz 10. 04109 Leipzig, Germany}
\email{giovanniconfort@gmail.com}

\author{Christian Léonard }
\address{Modal-X. Université Paris Ouest. Bât.\! G, 200 av. de la République. 92001 Nanterre, France}
\email{christian.leonard@u-paris10.fr}

\keywords{Random walks on graphs, bridges of random walks, reciprocal characteristics, Schrödinger problem}
\subjclass[2010]{60J27,60J75}
\thanks{CL is partially supported by the French ANR projects GeMeCoD and STAB}

\usepackage{amssymb, amsmath, amsfonts, latexsym,amstext}
\usepackage{enumerate,graphicx}
\RequirePackage[colorlinks,linkcolor=blue,citecolor=blue,urlcolor=blue]{hyperref}
\usepackage[english]{babel}

\usepackage{color}
\usepackage{pst-fill,pst-grad,pst-plot,pst-eucl,pstricks-add,pst-node}

 \setcounter{tocdepth}{1}
 
 \oddsidemargin 0cm \evensidemargin 0cm \topmargin 0cm
\textheight 24cm \textwidth 16cm

%%%%%%%%%%%%%%%%
\newtheorem{theorem}{Theorem}
\newtheorem{lemma}[theorem]{Lemma}
\newtheorem{proposition}[theorem]{Proposition}
\newtheorem{corollary}[theorem]{Corollary}

\newtheorem{definition}[theorem]{Definition}
\newtheorem{definitions}[theorem]{Definitions}

\newtheorem{assumption}[theorem]{Assumption}

\theoremstyle{remark}
\newtheorem{remark}[theorem]{Remark}
\newtheorem{remarks}[theorem]{Remarks}

\numberwithin{theorem}{section}
%\numberwithin{equation}{section}

%%%%%%%%%%%%%%%%%%% definitions %%%%%%%

% ************* General *********

\newcommand{\RR}{\mathbb{R}}
\newcommand{\Rn}{\mathbb{R}^n}

\newcommand{\1}{\mathbf{1}}

\newcommand\pf{_{\#}}

\newcommand{\as}{ \textrm{-}\mathrm{a.s.}}

\newcommand*{\cchi}{\raisebox{0.35ex}{\( \chi \)}}

\DeclareMathOperator{\supp}{supp}

\DeclareMathOperator{\proj}{proj}

\newcommand{\Boulette}[1]{\par\medskip\noindent $\bullet$\ Proof of #1.}

% ************* definitions locales  ***************
\renewcommand{\AA}{ \mathcal{A}}
\newcommand{\Aa}{\AA_\rightarrow}
\newcommand{\As}{\AA_\leftrightarrow}
\newcommand{\AaR}{\AA_\rightarrow^R}
\newcommand{\AsR}{\AA_\leftrightarrow^R}
\newcommand{\Ao}{\AA_0}
\newcommand{\Ap}{\Aa}

\newcommand\XX{\mathcal{X}}

\newcommand\XXX{\XX^2}
\newcommand\PX{\mathrm{P}(\XX)}

\newcommand\PXX{\mathrm{P}(\XXX)}

\newcommand\PO{\mathrm{P}(\Omega)}

\newcommand\OO{\Omega}

\newcommand\ii{{[0,1]}}

\newcommand\IXX{\int_{\XXX}}

\newcommand{\Rec}{\mathcal{R}}
\newcommand{\cc}{\mathbf{c} }
\newcommand{\ff}{\mathbf{f} }

\newcommand{\ww}{\mathbf{w}}
\newcommand{\ee}{\mathbf{e}}
\newcommand{\bes}{\begin{equation*}}
\newcommand{\ees}{\end{equation*}}
\newcommand{\beq}{\begin{equation}}
\newcommand{\eeq}{\end{equation}}

\renewcommand{\to}{\rightarrow}

\newcommand{\sz}{\sum _{z':z\to z'}}
\newcommand{\sw}{\sz}

\newcommand{\sx}{\sum _{x\in\XX}}

\newcommand{\jj}{{\bar j}}
\newcommand{\kk}{{\bar k}}

\newcommand{\zz}{z\to z' }
\newcommand{\zw}{\zz}

\newcommand{\YY}{\mathcal{Y}}
\newcommand{\iii}{[0,1)}

% ********************************************

\begin{document}
\maketitle 
\tableofcontents

\begin{abstract}
The reciprocal class of a Markov path measures is the set of all the mixtures of its bridges. We give  characterizations of the reciprocal class of a continuous-time Markov random walks on a graph. Our main result is in terms of some reciprocal characteristics  whose expression only depends on the intensity of jump. We also characterize the reciprocal class by means of Taylor expansions in small time of some conditional probabilities. 

Our   measure-theoretical approach allows to extend significantly already  known results  on the subject. 
The abstract results are illustrated by  several examples.       

\end{abstract}

% ************* corps du texte ****************************

\section*{Introduction}

This article answers the questions: \emph{``When does a continuous-time random walk on a graph share its bridges with  a given Markov   walk?''} and \emph{`` What does a random walk share with its bridges?"}, both in terms of their intensities of jumps and of Taylor expansions in small time of probabilities of conditioned events. The precise answers are stated at  Theorem \ref{res-08} and Corollary \ref{res-12} which are our main results. 

The set of all path measures sharing the bridges of a given  Markov measure is called its reciprocal class.
In contrast with the existing literature about reciprocal classes, i.e.\ shared bridges, which    relies on  transition probabilities, in this paper  we adopt  a measure-theoretical approach: our main objects of interest are path measures, i.e. probability measures on the path space, rather than transition probability kernels.  It turns out that this is an efficient way for  solving our problem and  allows to extend significantly already  known results  on the subject.       

\subsection*{Notation}
Some notation is needed before bringing  detail about the above questions and their answers.
For any measurable space $Y,$ $ \mathrm{P}(Y)$ denotes the set of all probability measures on $Y.$ We denote the support of a probability measure $p$ by $\supp p.$
On a discrete space $A,$  we have $\supp p=\{a\in A: p(a)>0\}$ and for any probability measures  $p$ and $q$, $p$ is absolutely continuous with respect to $q:$   $p\ll q$, if and only if $\supp p\subset \supp q.$ 
\\
The support of a function $u\in\RR^A$ is defined as usual by $\supp u:= \left\{a\in A: u(a)\not =0\right\} .$ Functions with 	a finite support will be useful to define Markov generators without extra assumptions on the intensity of jumps.

We consider random walks from the unit time interval $[0,1]$ to a countable directed graph $(\XX,\AA)$ where only jumps along the set $\AA\subset \XXX$ of the  arcs of the graph  are allowed. The set of all the sample paths  is denoted by $\OO\subset \XX ^{ [0,1]}$. As we adopt a measure theoretical viewpoint, it is worth identifying the random processes and their laws on the path space. Consequently, any path measure  $P\in\PO$ is called a random walk.
\\
The canonical process on $\OO$ is denoted as usual by  $(X_t;0\le t\le 1)$.
 For any random walk $P\in\PO$  we denote 
 \begin{itemize}
 \item
 $P_0(dx):=P(X_0\in dx)\in\PX,$ its initial marginal;
 \item
  $P _{ 01}(dxdy):=P(X_0\in dx, X_1\in dy)\in\PXX$, its endpoint marginal;
  \item
   $P^x:=P(\cdot\mid X_0=x)\in\PO,$  the random walk conditioned to start at $x\in\supp P_0$;
   \item
   $P ^{ xy}:=P(\cdot\mid X_0=x,X_1=y)\in\PO,$ its $xy$-bridge with  $(x,y)\in\supp P _{ 01}$.
 \end{itemize}

\subsection*{Aim of the article}

We take  some Markov random walk $R\in\PO$ with an intensity of jumps $j:[0,1]\times \AA\to[0, \infty)$   and we assume that its initial marginal $R_0\in\PX$ has a full support. This random walk is our reference path measure. 

For comparison with the results of this article,  let us consider for a little while the set
  $$
 \mathcal{M}(j):= \left\{\sum _{ x\in\XX} \mu_0(x)\, R^x; \mu_0\in\PX\right\} \subset\PO,
 $$
 assuming  that for any $x$, $R^x$  is uniquely well defined. Obviously, for any $P\in\PO,$ the three following statements are equivalent:
\begin{enumerate}
\item
$P\in \mathcal{ M}(j);$
\item
$P^x=R^x,$ for all $x\in\supp P_0;$
\item
For any $x\in\supp P_0,$ $P^x $ is Markov with intensity $j.$
\end{enumerate}

 Rather than the collection of all  random walks $R^x$ conditioned by their starting point $x\in\XX$, our interest is in the bridges  of $R$. We define 
 \begin{equation}\label{eq-35}
 \Rec(j):= \left\{\sum _{ x,y\in\XX} \pi(x,y)\,R ^{ xy}; \pi\in\PXX: \supp\pi\subset\supp R _{ 01}\right\} \subset \PO
 \end{equation}
to be the convex hull of  all these bridges.  This set  is called the \emph{reciprocal class} of the intensity $j$. Since the reference random walk $R$ is Markov, so are its bridges $R ^{ xy}.$ But, in general a mixture of such bridges fails to remain Markov. However, any element $P$ of $\Rec(j)$
still satisfies the reciprocal property which extends the Markov property in the following way.

\begin{definition}[Reciprocal walk]\label{def-10}
A random walk $P\in\PO$ is said to be a \emph{reciprocal} walk if for any $0\le u\le v\le 1,$ $P(X _{[u,v]}\in \cdot\mid X _{[0,u]}, X _{[v,1]})=P(X _{[u,v]}\in \cdot\mid X_u, X_v).$
\end{definition}
The reciprocal property of a path measure is defined in accordance with the usual notion related to processes. 
Basic material about reciprocal walks is collected  at Appendix \ref{sec-A}.

 We see clearly that   for any $P\in\PO$ such that $\supp P _{ 01}\subset \supp R _{ 01},$ the two following statements are equivalent:
\begin{enumerate}
\item[(1)']
$P\in \mathcal{R}(j);$
\item[(2)']
$P ^{ xy}=R ^{ xy},$ for all $(x,y)\in\supp P _{ 01}.$
\end{enumerate}
The aim of the article is to provide an analogue of statement (3) above.  Indeed,  Theorem \ref{res-08} states that (1)' and (2)' are equivalent to
\begin{enumerate}
\item[(3)']
For any $(x,y)\in\supp P _{ 01},$ $P^{xy} $ is Markov and\begin{equation}\label{eq-34}
\cchi[k^{xy}]=\cchi[j],
\end{equation}
\end{enumerate}
where $k^{xy}$ is the intensity of $P^{xy}$ and $\cchi[k^{xy}], \cchi[j]$ are described at Definition \ref{def-01}.
For this reason, $\cchi[j]$ is called the \emph{characteristic} of the reciprocal class $\Rec(j).$  

There are other random walks $P$  than $R$  in
$\Rec(j)$ that are Markov. In this case, for every $x\in\supp P_0,$ the intensity $k^x$ of   $P^x$   does not depend on $x$. When $k^x$ depends explicitly on $x$, $P$ is not Markov; this is the case for most of  the elements of $\Rec(j)$.

\subsection*{Variational processes}

Beside the interest in its own right of the description of the convex hull of the bridges of some Markov dynamics, there exists a stronger motivation for investigating the reciprocal class of a Markov process.  Suppose that you observe two large samples of non-interacting particles systems 1 and 2 with two distinct endpoint distributions (i.e.\ empirical measures of the couples of initial and final positions). \emph{Are these two random systems driven by the same force field?}  In mathematical terms, you want to know if the path measures $P_1$ and $P_2$ corresponding to the systems 1 and 2 belong to the same reciprocal class. To understand this equivalent statement, let us provide some comments.
This question is rooted into a problem addressed by Schrödinger in the early 30's in the articles \cite{Sch31,Sch32}. 

\subsubsection*{Schrödinger problem}

Consider a large number $N$ of independent particles labeled by $1\le i\le N$ and moving according to some Markov dynamics described by  the reference path measure $R\in\PO.$ In Schrödinger's papers, $R$  is the law of a Brownian motion on $\XX=\Rn$ and $\OO=C([0,1],\Rn).$  Suppose that at time $t=0$, the particles are distributed according to a profile close to some distribution $ \mu_0\in\PX.$ In modern terms, this means that the empirical measure $ \frac{1}{N}\sum _{ 1\le i\le N} \delta _{ X_i(0)}$ of the initial sample is weakly close to $ \mu_0,$  where $ \delta_a$ is the Dirac measure at $a$ and $t\mapsto X_i(t)$ describes the random motion of the $i$-th particle. Suppose that at time $t=1$ you observe that the whole system is such that its distribution  profile $ \frac{1}{N} \sum _{ 1\le i\le N} \delta _{ X_i(1)}$ is weakly close to some $ \mu_1\in\PX$  far away from the expected profile $ \mu_0 e^L\in\PX$  which is predicted by the law of large numbers. Here, $L$  is the Markov generator of $R$. Schrödinger asks what is the most likely trajectory of the whole particle system conditionally on this very rare event. As translated in modern terms by Föllmer in \cite{Foe85} using large deviations technics, the empirical measure $ \frac{1}{N}\sum _{ 1\le i\le N} \delta _{ X_i}$ weakly tends in $\PO$ to the unique solution of the entropy minimization problem
\begin{equation}\label{eq-27}
H(P|R)\to \mathrm{min};\qquad P\in\PO: P_0= \mu_0, P_1= \mu_1,
\end{equation}
where $H(P|R):= E_P\log(dP/dR)\in [0, \infty]$ is the relative entropy of $P$ with respect to the reference measure $R$. A recent review of the Schrödinger problem is proposed in  \cite{Leo12e}.

\subsubsection*{A stochastic analogue of Hamilton's principle}

In the  case where $R$ is a  Brownian motion, any $P\in\PO$ with $H(P|R)$  finite is the solution of a martingale problem associated with some  adapted drift field $ \beta^P$ such that $E_P\int _{ [0,1]}| \beta^P_t|^2\,dt< \infty$ and
$	%\begin{equation*}
H(P|R)= E_P\int _{ [0,1]}| \beta^P_t|^2/2\,dt.
$	%\end{equation*}
As $ |\beta^P_t|^2/2$ is a kinetic energy,   $H(P|R)$ is an average kinetic  action and the minimization  problem  \eqref{eq-27} appears to be a stochastic generalization of the usual Hamilton variational principle, see for instance \cite{Leo12e} and the references therein.

\subsubsection*{A natural extension of Schrödinger's problem}

Problem \eqref{eq-27} also admits the following natural extension
\begin{equation}\label{eq-33}
H(P|R)\to \mathrm{min};\qquad P\in\PO: P _{ 01}= \pi
\end{equation}
with $ \pi\in\PXX$ a prescribed endpoint distribution. In some sense, the entropy minimization  problem \eqref{eq-33} is the widest stochastic extension of  the classical Hamilton least action principle.

\subsubsection*{The connection with the reciprocal class $\Rec[j]$}

To see the connection with the bridges of $R,$ let us go back to   our discrete set of vertices $\XX$  and  note that both  problem  \eqref{eq-27} with the prescribed marginals $ \mu_0= \delta_x$ and $ \mu_1= \delta_y$ and   problem \eqref{eq-33} with $ \pi= \delta _{ (x,y)},$ admit the unique solution $$P=R ^{ xy}.$$ This is a simple consequence of the additive  decomposition formula
\begin{equation*}
H(P|R)=H(P _{ 01}|R _{ 01})+\sum _{ x,y\in\XX} P _{ 01}(x,y) H(P ^{ xy}|R ^{ xy})
\end{equation*}
applied with $P _{ 01}= \delta _{ (x,y)}.$ One can interpret the bridge $R ^{ xy}$ as the stochastic analogue of a minimizing geodesic between $x$ and $y$.  
We also see with the additive  decomposition formula that the unique solution  of \eqref{eq-33} with a general endpoint distribution $ \pi\in\PXX$ such that $H( \pi|R _{ 01})< \infty$
 is 
\begin{equation*}
P=\sum _{ x,y\in\XX} \pi(x,y)\, R ^{ xy}.
\end{equation*}
Therefore, the reciprocal class $\Rec(j)$ appears to be essentially  the set of all the solutions of the stochastic variational problem \eqref{eq-33} when $ \pi\in\PXX$ describes all the possible endpoint distributions. In addition, we see with \eqref{eq-34} that the bridges of $R$ all satisfy
\begin{equation*}
\cchi[j^{ xy}]=\cchi[j],\quad \forall (x,y)\in \supp R _{ 01}.
\end{equation*}
\emph{This indicates that the reciprocal characteristic $ \cchi[j]$ encrypts  the underlying stochastic Lagrangian associated with the stochastic action minimization problem \eqref{eq-33}.}

This point of view is developed in a diffusion setting by Zambrini and the second author in \cite{LZ14}.  It will be explored in the present setting of random walks on graphs in a forthcoming paper.

At the present time very little is known about the solutions of the variational problems \eqref{eq-27} and \eqref{eq-33} in the setting of random walks on graphs. This paper is a contribution in this direction.

\subsection*{Literature}

One year after    Schrödinger's article   \cite{Sch31}, Bernstein introduced in \cite{Bern32} the reciprocal property as a notion that extends Markov property and is respectful of the  time reversal symmetry. It was further developed four decades later by Jamison  \cite{Jam74,Jam75}.
Relying on Jamison's approach to reciprocal processes, Clark  proved in \cite{Cl91} a conjecture of Krener \cite{Kre88} who proposed a characterization of the reciprocal class of a Brownian diffusion process in terms of an identity of the type $\cchi[ \beta^P]=\cchi[0]$ where $ \beta^P$ is the drift of the Markov diffusion process $P$ (recall the discussion below Eq.\,\eqref{eq-27} for the notation $ \beta^P$). Clark called $\cchi[ \beta^P]$ a   reciprocal \emph{invariant}. In the present article, we prefer naming ``reciprocal \emph{characteristics}'' the analogous quantities $\cchi[k]$.\footnote{In the contexts of classical, quantum and stochastic mechanics (the latter being taken in its wide acception including Nelson's mechanics \cite{Nel85}, Euclidean quantum mechanics \cite{CZ08} or hydrodynamics  where the evolution is deterministic but the initial state is described by a probability measure), the term ``invariant'' refers to  conserved quantities as time varies.} In view  of the previous discussion about variational processes, it is not surprising that the reciprocal characteristics play a distinguished role when looking at reciprocal processes as solutions of second order stochastic differential equations. This is investigated by Krener, Levy and Thieullen in  \cite{KL93,Th93,Kre97}.

Characterization of reciprocal classes can also be stated in terms of stochastic integration by parts formulas, often called duality formulas. This was investigated by  Roelly and Thieullen in \cite{RT02,RT05} for diffusion processes. In the specific context of counting random walks, this is  done in \cite{CLMR14}, a paper by Murr, Roelly and the authors of the present article. This was extended by Dai Pra, Rœlly and the first author in \cite{CDPR14} for compound Poisson processes and in \cite{CR14} for random walks on Abelian groups. A main idea of \cite{CDPR14,CR14}  is to exploit the translation-invariant structure of the underlying graph to characterize the reciprocal classes through integration by parts formulas where the derivation  measures the variation when adding a random closed walk to the canonical process, see \cite[Thm.\,3.3]{CDPR14}  and  \cite[Thm.\,13]{CR14}. In all these cases, reciprocal characteristics play a major role.
 If the graph is not assumed to be invariant with respect to some group transformations, such as translation-invariance and time homogeneity, there is no way of thinking of a natural derivative. 
Since we do not assume any invariance in the present article, the integration by parts approach is not investigated.

It is worthwhile to note that, except for \cite{RT02} and  the recent papers  \cite{CLMR14,CDPR14,CR14,LZ14}, in the whole literature on the subject, only the \emph{Markov members} of the reciprocal class, that is the solutions of the original Schrödinger problem \eqref{eq-27} as the marginal constraints vary, are characterized. In the present article, following a strategy close to  \cite{LZ14}'s one,  we give a characterization of the \emph{whole} reciprocal class, that is  the set of solutions of \eqref{eq-33} as $\pi$ varies, under very few restrictions  on the reference random walk.

\subsection*{Outline of the paper}
Next Section \ref{sec-prem} is devoted to some preliminaries about directed graphs, Markov walks and their intensities. We also state our main hypotheses which are Assumption \ref{as-03} and Assumption \ref{as-01} and define carefully the reference path measure $R$. Our main results are stated at Section \ref{sec-main results}. They are Theorems \ref{res-08} and \ref{res-06}, together with their Corollary \ref{res-12}. Theorem \ref{res-08} is a rigorous version of statement  (3)'  while Theorem \ref{res-06} provides an interpretation of the reciprocal characteristics in terms of Taylor expansions in small time  of  some conditional probabilities. Their proofs are done at Section \ref{sec-proofs}. The key preliminary result is Lemma \ref{res-03} whose proof is based on the identification of two expressions for the Radon-Nikodym derivative $dR^{xy}/dR$. Several examples are treated at Section \ref{sec-examples}. We have  a look at:  birth and death processes, some planar graphs, the hypercube, the complete graph and some Cayley graphs. We calculate their reciprocal characteristics and sometimes solve the associated characteristic equation. Finally, there are appendix sections devoted to reciprocal random walks and to closed walks on a directed graph.

\section{Preliminaries}\label{sec-prem}

This section is devoted to some preliminaries about directed graphs, Markov walks and their intensities. Our main hypotheses are stated below at Assumption \ref{as-03} and Assumption \ref{as-01}. Much of the material in this section is required for the definition of the reciprocal characteristics and the  statements of our results at Section \ref{sec-main results}.
\subsection*{Directed graphs}
Let $\XX$ be a countable set and $ \AA\subset\XXX$.
 The \emph{directed graph}  associated with $\AA$ is defined by means of the relation $\to$, meaning that for all $z,z'\in\XX$ we have $\zz$ if and only if $(z,z')\in \AA.$   We denote $(\XX,\to)$ this directed graph, say that any $(z,z')\in \AA$ is an arc and  write $(\zz)\in \AA$ instead of $(z,z')\in \AA.$ 

 We are concerned with directed graphs $(\XX,\to )$ satisfying the following:
\begin{assumption}\label{as-03}
The directed graph $(\XX,\to)$ satisfies the following requirements.
 \begin{enumerate}
 \item $\AA$ is symmetric:  $(x \to y )\in \AA \Rightarrow (y \to x ) \in \AA$.
\item It is connected: for any $x,y$ there exists a directed walk from $x$ to $y.$ 
\item
It is of bounded degree.
\item
It has no loops, meaning that for all $z\in\XX,$ $(z\to z)\not\in \AA$.
\end{enumerate}
\end{assumption}

Let us also give some definitions which are necessary to state our main results. Here, we use some standard vocabulary in graph theory. For more detail about walks, simple walks, closed walks, we refer to Appendix \ref{sec-B}.

\begin{definitions}[Tree and basis of closed walks]\label{defs-01}
\ \begin{enumerate}[(a)]
\item We call \emph{tree} a symmetric connected subgraph $\mathcal{T}$ of $(\XX,\to)$ with no closed walks of length at least three. In this paper a tree is always a \emph{spanning} tree, in the sense that it connects any pair of vertices in  $ \XX$.
\item Let $\mathcal{T}$ be a tree. If $(x \to y) \notin \mathcal{T}$, we denote $\mathbf{f}_{x \to y}$ 
the closed walk obtained by concatenating $x \to y$ with the only simple directed walk from $y$ to $x$ in $\mathcal{T}$. 
\item Let $\mathcal{T}$ be a tree. A $\mathcal{T}$-\emph{basis of  the closed walks} of $(\XX, \to)$ is any subset $\mathcal{C}$ of closed walks of the form: 
\bes
\mathcal{C} = \mathcal{C}_0 \cup \mathcal{E}
\ees
 where 
 \bes
 \mathcal{E}:= \{ (x \to y \to x), \ (x\to y) \in \XXX \}
 \ees
 stands for the set of all edges
 and $\mathcal{C}_{0}$ is obtained by choosing for any $(x \to y \to x) \in \mathcal{E} \setminus \mathcal{T}$ exactly one among $\mathbf{f}_{x \to y}$ and $\mathbf{f}_{y \to x}$
\end{enumerate}
\end{definitions}
With the above construction,  all elements of a basis of closed walks are indeed closed walks. 
We refer to \cite[Sec. 2.6]{BM07} for the notion of cycle basis of  an undirected graph.

\subsection*{Random walk on a graph}

%We assume without loss of generality for our purpose that the graph $(\XX, \sim)$ is connected.
%We  assume that $(\XX,\sim)$ is a locally finite graph meaning that each vertex $x\in\XX$ admits finitely many neighbours. That is
%\begin{equation}\label{eq-07}
%n_x:= \# \left\{y\in\XX; y\sim x\right\} <\infty,\quad \forall x\in\XX.
%\end{equation}

The countable set $\XX$ is equipped with its discrete topology.
We  look at  continuous-time random paths on $(\XX,\to)$ with finitely many jumps on the bounded time interval $[0,1]$. The corresponding  \emph{path space} $\OO\subset \XX ^{\ii}$  consists of all càdlàg   piecewise constant paths $\omega=(\omega_t)_{0\le t\le1}$ on $\XX$ with  \emph{finitely many jumps} such that $ \omega _{ 1^-}= \omega_1$ and for all $ t\in (0,1),$ $\omega _{t^-}\ne \omega _{t} $ implies that $\omega _{t^-}\to\omega_t$.
 It is equipped with the canonical $\sigma$-field  generated by the canonical process. 
 
 \begin{definition}
  We call any probability measure  on $\OO$  a  \emph{random walk} on $(\XX,\to)$.
 \end{definition}
  This is not  the customary usage, but it turns out to be  convenient. As a probability measure, it specifies the behavior of a piecewise constant continuous-time random process  that  may not be Markov.

\subsubsection*{Notation related to random walks}
As usual, the canonical process $X=(X_t)_{t\in\ii}$ is defined for each $t\in\ii$ and $\omega=( \omega_s) _{ 0\le s\le 1}\in\OO$ by $X_t(\omega)=\omega_t\in\XX.$
For any $\mathcal{I}\subset\ii$ and any random walk $P\in\PO,$ we denote $X_\mathcal{I}=(X_t)_{t\in \mathcal{I}}$ and the push-forward measure $P_\mathcal{I}=(X_\mathcal{I})\pf P.$ In particular, for any $0\le t\le1,$ $P_t=(X_t)\pf P\in\PX$ denotes the law of the position $X_t$ at time $t$ and $P _{ 01}:=P _{ \left\{0,1\right\} }$ denotes the law of the endpoint position $(X_0,X_1).$ Also, for all
 $0\le r\le s\le1,$ $X_{[r,s]}=(X_t)_{r\le t\le s}$ and  $P_{[r,s]}=(X_{[r,s]})\pf P.$ 
 \\
 We are mainly interested in  bridges.
 In a general setting, one must be careful because  the bridge 
 $
 P ^{ xy}
 $
is only defined $P _{ 01}$-almost everywhere. But in the present case where the state space $\XX$ is countable, the kernel $(x,y)\mapsto P ^{ xy}$ is defined \emph{everywhere} on $\supp P _{ 0 1}$, no almost-everywhere-precaution is needed when talking about bridges. In particular, any random walk $P$ disintegrates as
\begin{equation*}
P= \sum _{ (x,y) \in \supp P _{ 01}}P_{01}(x,y)P ^{ xy}\in\PO.
\end{equation*}
 In the whole paper, the letters $x$ and $y$ are devoted respectively to the initial and final states of random walks. Current states are usually denoted by $z,z'\in\XX.$

\subsection*{Markov walk}

The Markov  property of a path measure is defined in accordance with the usual notion related to processes. 

\begin{definition}[Markov walk]\
A random walk $P\in\PO$ is said to be a \emph{Markov} walk if for any $0\le t\le1,$ 
$P (X _{[t,1]}\in \cdot\mid X _{[0,t]})=P (X _{[t,1]}\in \cdot\mid X _t).$
\end{definition}

A Markov walk $P\in\PO$ is the law of a continuous-time Markov chain with its sample paths in $\OO.$ In our setting, all the Markov walks to be encountered will be associated with some intensity of jumps\footnote{This is maybe the case for any Markov walk with finitely many jumps, but we shall not need to investigate such a general existence result.}  $k:[0,1)\times \AA\to[0, \infty)$  which gives rise  to the infinitesimal generator  that acts on any real function $u\in\RR^\XX$ with a finite support via the formula
\begin{equation*}
K_t u(z)=\sw k(t,\zw)\,(u_{z'}-u_z), 
\quad  z\in\XX, t\in[0,1).
\end{equation*}
The random walk $P\in\PO$ is such that for any real function $u\in\RR^\XX$ with a finite support, the process $u(X_t)-\int_0^t K_su(X_s)\,ds$ is a local $P$-martingale with respect to the canonical filtration.
The average frequency of jump from $z$ at time $t$ is
\begin{equation}\label{eq-06}
\kk(t,z):=\sw k(t,\zw ).
\end{equation}
Note that this is a finite number because it is assumed that $(\XX,\to)$ is locally finite.
The sample paths of the random walk  $P$ admit a version in $\OO$ if and only if $k(t,\zw )$ is $t$-measurable for all $z$ and $z'$ and 
\begin{equation*}
\int _{[0,1)} \kk(t,X_t)\,dt<\infty,\ P\as
\end{equation*}
Indeed,  this estimate means that the canonical process performs  $P\as$ finitely many jumps.  

In the situation where $k$ doesn't depend on $t$, the dynamics of the random walk is described as follows. Once  at site $z,$ the walker waits during a random time with exponential law with parameter  $\kk(z)$ and then jumps onto $z'$ according to the probability measure $\kk(z) ^{-1}\sw k(\zw ) \delta _{ z'}$ where $\delta _{ z'}$ stands for the Dirac measure at $z'$, and so on; all these random events being mutually independent.

Note that $k(t,\cdot)$ and $K_t$
 are only defined for $t$ in the semi-open interval $[0,1).$ The reason for this is that we are going to work with mixtures of bridges and the forward intensity of a bridge is singular at $t=1.$

\subsection*{The reference intensity of jumps and the reference random walk}

We introduce a  random walk $R\in\PO$  with intensity of jumps $j$. Both $R$ and $j$ will serve as reference path measure and intensity. 

\begin{comment}
We assume that $\Aa(t,j)$ doesn't depend on time:
\begin{equation}\label{eq-01}
\Aa(t,j)=\Aa(j),\quad \forall t\in\iii.
\end{equation}
We also suppose that
\begin{equation}\label{eq-y02}
\As(j)=\AA \quad \textrm{and}\quad  \proj_\XX\AA=\XX
\end{equation}
where as a definition, for any $A\subset \XXX,$
\begin{equation*}
\proj_\XX(A):=\left\{z\in\XX: (\zz)\in A \textrm{ for some } z'\in\XX\right\}.
\end{equation*}
Although \eqref{eq-01} is a restriction, when requiring \eqref{eq-y02} one doesn't loose generality since all the random walks to be encountered later are absolutely continuous with respect to $R.$ Note that this implies that $(\XX,\AA)$ is a  symmetric directed graph.
Similarly we denote $$ \Aa(j)=\Aa$$ 
to ease notation.
\end{comment}

\begin{assumption}\label{as-01} \ 
The Markov  jump intensity $j:\ii\times\AA\to[0, \infty)$ verifies the following requirements.
\begin{enumerate}[(1)]
\item
The estimate
\begin{equation}\label{eq-11}
\sup _{ t\in\ii,z\in\XX}\jj(t,z)<\infty
\end{equation}
holds where, as in \eqref{eq-06},
$	%\begin{equation*}
\jj(t,z):=\sw j(t,\zw )
$	%\end{equation*}
stands for the average frequency of jump from $z$ at time $t$.
\item
 $j$ is continuously $t$-differentiable, i.e.\ for any    $(\zz) \in \AA$ the function $t\mapsto j(t,\zw )$ is continuously differentiable on  $\ii$.
\item  $j$ is positive: 
\begin{equation*}
\forall \, t \in [0,1], \zz \in \mathcal{A}, \quad j(t,\zz)>0
\end{equation*}
\end{enumerate}
\end{assumption}

%In some of our results we will make use of the following stronger \begin{assumption}\label{as-02} In addition to the Assumption \ref{as-01}, it is assumed that $ \Aa$ is symmetric, that is: $ \Aa=\As=\AA$, or equivalently for all $(\zz)\in\AA,$ $(\zz)\in \Aa\iff (z'\to z)\in \Aa.$\end{assumption}

The Assumption \ref{as-01} (1)  implies that
for each $x\in\XX,$ there exists a unique solution $$R^x\in\PO$$ to the martingale problem with initial marginal $ \delta_x$ associated with the generator $L=(L_t)_{ 0\le t\le 1}$ defined for all finitely supported functions $u$ by
\begin{equation*}
L_t u(z)=\sw j(t,\zw )\,(u_{z'}-u_z), 
\qquad  t\in \ii, z\in\XX.
\end{equation*} 
Because of the fact that $(\XX,\to)$ is connected and Assumption \ref{as-01} the bridge $R ^{xy}:=R^x(\cdot\mid X_1=y)$ of $R^x$  is well defined for all $x,y\in\XXX $. 
The reference random walk  is defined by:
 \begin{equation*}
 R:=\sx R_0(x)\,R^x\in\PO
 \end{equation*}
 where the initial marginal $R_0$ is any probability measure on $\XX$ with a full support, i.e.\ $\supp R_0=\XX.$ 
 
 \begin{comment}
\begin{definitions}[About the $j$-active arcs from $x$ to $\YY$]
\ Let  $x\in\XX$ be any vertex and $\YY$ be any nonempty subset of $\supp R^x_1.$ 
\begin{enumerate}[(a)]
\item
We define the subgraph
\begin{equation*}
\AaR(x,\YY):= \cup _{ y\in\YY}\Aa(R ^{ xy})
\end{equation*}
 of all the arcs that constitute  the $\Aa$-walks from $x$ to $\YY.$ 
\item
We  denote $\AsR(x,\YY)$ its symmetric extension.
\item
We define the set
\begin{equation*}
\XX^R(x,\YY):=\proj_\XX  \AaR(x,\YY)
\end{equation*}
 of all vertices visited by the $\Aa$-walks starting at $x$ and ending in $\YY.$
\end{enumerate}
\end{definitions} 

\begin{remark}\ 
 Remark that $\{x\}$ and $\YY$ may be proper subsets of $\XX^R(x,\YY)$ as the example of a bridge $R ^{ xy}$   suggests in many situations. We also have for any $P\in\PO$,
 $$( \left\{x\right\} \cup \supp P^x_1)\subset \XX(P^x)=\proj_\XX\Aa(P^x)$$
 where the inclusion may be strict.
\end{remark}
\end{comment}

 \section{Main results}\label{sec-main results}
 
Before stating the main results of the article, we still need to introduce two objects which are related to the notion of reciprocal walk, see Definition \ref{def-10} for this notion.

\subsection*{Reciprocal class}
The reciprocal class $\Rec(j)$ is defined at \eqref{eq-35}. It is the main object of our study.

\begin{proposition}\label{res-01}
The reciprocal class
$\Rec(j)$  is a set of reciprocal walks  in the sense of Definition \ref{def-10}, which are  absolutely continuous with respect to $R$.
\end{proposition}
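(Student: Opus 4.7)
The plan is to verify the two claims separately, exploiting nothing more than the disintegration formula for $R$ and the Markov property of the reference walk.

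First I would dispose of absolute continuity, which is essentially a triviality. Since $R = \sum_{x,y \in \supp R_{01}} R_{01}(x,y) R^{xy}$, any $R$-negligible event $A$ satisfies $R^{xy}(A) = 0$ for every $(x,y) \in \supp R_{01}$. Given $P = \sum_{x,y} \pi(x,y) R^{xy}$ with $\supp \pi \subset \supp R_{01}$, only such pairs contribute to the sum, so $P(A) = 0$.

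For the reciprocal property, fix $0 \le u \le v \le 1$ and bounded measurable $f, g, h$ of $X_{[u,v]}, X_{[0,u]}, X_{[v,1]}$ respectively. I would expand
\begin{equation*}
E_P[f(X_{[u,v]})g(X_{[0,u]})h(X_{[v,1]})] = \sum_{x,y} \pi(x,y)\, E_{R^{xy}}[f(X_{[u,v]})g(X_{[0,u]})h(X_{[v,1]})].
\end{equation*}
Each bridge $R^{xy}$ is Markov, since it is the bridge of a Markov walk; hence each $R^{xy}$ satisfies the reciprocal property, and conditioning on $(X_u, X_v) = (a,b)$ splits $X_{[u,v]}$ from $(X_{[0,u]}, X_{[v,1]})$. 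The crucial point to verify is that the conditional kernel
\begin{equation*}
\phi_{xy}(a,b) := E_{R^{xy}}[f(X_{[u,v]}) \mid X_u = a, X_v = b]
\end{equation*}
does \emph{not} depend on $(x,y)$. This follows from the Markov property of $R$ itself: further conditioning $R^{xy}$ on $(X_u, X_v) = (a,b)$ amounts to conditioning $R$ on the four positions $(X_0, X_u, X_v, X_1) = (x,a,b,y)$, and by the Markov property the law of $X_{[u,v]}$ under this conditioning is $R$'s bridge from $a$ to $b$ over $[u,v]$, which is independent of the endpoints $x,y$. Writing this common value $\phi(a,b)$, the display above becomes $E_P[g(X_{[0,u]}) h(X_{[v,1]}) \phi(X_u, X_v)]$.

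It remains to identify $\phi(X_u, X_v)$ with $E_P[f(X_{[u,v]}) \mid X_u, X_v]$: running the same mixture argument with $g \equiv h \equiv \mathbf{1}_{X_u=a, X_v=b}$ gives, for any $(a,b)$ with $P(X_u=a, X_v=b) > 0$,
\begin{equation*}
E_P[f(X_{[u,v]}) \mid X_u=a, X_v=b] = \frac{\sum_{x,y} \pi(x,y) R^{xy}(X_u=a,X_v=b)\, \phi(a,b)}{\sum_{x,y}\pi(x,y) R^{xy}(X_u=a,X_v=b)} = \phi(a,b),
\end{equation*}
exactly because $\phi$ does not depend on $(x,y)$. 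The main (mild) obstacle is thus the $(x,y)$-independence of $\phi_{xy}$; once this is in hand, the result drops out of Markovianity of each bridge and the disintegration of $P$.
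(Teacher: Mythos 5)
Your argument is correct, but it is organized differently from the paper's. The paper's proof of Proposition \ref{res-01} is essentially a two-line citation: the reciprocal property is obtained from Remark \ref{rem-01}(d) (a mixture of bridges of a Markov, hence reciprocal, walk is again reciprocal, with the underlying facts deferred to the appendix and to \cite{LRZ12}), and absolute continuity from Proposition \ref{res-02}, which writes $P=k(X_0,X_1)\,R$ with $k=d\pi/dR_{01}$, so that $P\ll R$ is immediate. You instead re-derive both facts from scratch: the absolute continuity by noting that an $R$-null set is $R^{xy}$-null for every $(x,y)\in\supp R_{01}$ and that $\pi$ only charges such pairs (equivalent to, and no heavier than, the paper's density argument), and the reciprocal property by the mixture computation with product test functions $f\,g\,h$, which is in substance the same computation the paper uses to prove Proposition \ref{res-05}. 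Your key observation --- that $\phi_{xy}(a,b)=E_{R^{xy}}[f(X_{[u,v]})\mid X_u=a,X_v=b]$ does not depend on $(x,y)$ --- is exactly the statement that the Markov walk $R$ is itself reciprocal (Remark \ref{rem-01}(a)); invoking ``the Markov property of $R$'' there is fine, though strictly it is the two-sided conditional independence given $(X_u,X_v)$ that you are using, which takes one extra standard line (Markov at time $u$ to drop $X_{[0,u]}$, then at time $v$ to drop $X_{[v,1]}$). Two cosmetic points: the test functions in your last display should be $g=\mathbf{1}_{\{X_u=a\}}$ (measurable w.r.t.\ $X_{[0,u]}$) and $h=\mathbf{1}_{\{X_v=b\}}$ (w.r.t.\ $X_{[v,1]}$) rather than a single indicator of both events, and passing from product test functions to general $\sigma(X_{[0,u]},X_{[v,1]})$-measurable functions requires the usual monotone class step. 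What your route buys is a self-contained proof that does not lean on the appendix remarks or the literature; what the paper's route buys is brevity, since the same mixture argument is needed anyway for Proposition \ref{res-05}.
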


\begin{proof}
 By its very definition, $\Rec(j)$ is the subset of all convex combinations of the bridges of the Markov walk $R.$ Remark \ref{rem-01}(d) tells us that any $P\in\Rec(j)$ is reciprocal. Moreover, Proposition  \ref{res-02} tells us that $P\ll R$ because  $\supp P _{ 01}=\supp \pi\subset \supp R _{01}= \XX^2.$
\end{proof}

\begin{remark}
Since any element of $\Rec(j)$ is absolutely continuous with respect to $R$, by Girsanov's theory it admits  a predictable intensity of jumps, see \cite[Thm.\,4.5]{Jac75}. This will be used constantly in the rest of the article.
\end{remark}

\subsection*{Reciprocal characteristics}

We are going to give a   characterization of the elements of $\Rec(j)$ in terms of  \textit{reciprocal characteristics} which we introduce right now.

\begin{definitions}[Reciprocal characteristics of a Markov random walk] \label{def-01}
Let $k$ be a jump intensity
which is assumed to be continuously $t$-differentiable and positive, i.e.\ for any    $(\zz) \in \mathcal{A}$ the function $t\mapsto k(t,\zw )$ is continuously differentiable on the semi-open time interval $[0,1)$ and positive on $(0,1)$.
\begin{enumerate}[(a)]
\item We define for  all $t \in (0,1)$ and all  $(\zw )\in \mathcal{A}$, 
\begin{equation*}
\cchi_{a}[k](t,\zw ):= 
\partial_t \log k(t,\zw ) + \kk(t,z')-\kk(t,z)
\end{equation*}
where $\kk$ is defined at \eqref{eq-06}.
\item
 We define  for all $t \in (0,1)$ and any closed walk $\cc=(x_0\to \cdots\to x_{\vert \cc \vert}=x_0)$ on  $(\XX,\to)$ 
\begin{equation*}
\cchi_c[k](t,\cc):= \prod_{i=0}^{|\cc|-1} k(t,x_i \to x_{i+1}) .
\end{equation*}
 See Definition \ref{def-02} for the notion of closed walk.
\item
 We call $\cchi[k]=(\cchi_a[k],\cchi_c[k])$ the reciprocal characteristic of $k$.
\\
The term  $\cchi_a[k]$ is  the \emph{arc component} and   $\cchi_c[k]$  is the \emph{closed walk component} of $\cchi[k]$. 
\end{enumerate}
\end{definitions}
Note that under our regularity assumption on $k$, $ \partial_t$ acts on a differentiable function:  $\cchi[k]$ is well defined.

\subsection*{The main results}

They are stated at Theorems \ref{res-08}, \ref{res-06} and Corollary \ref{res-12}.
Theorem \ref{res-08} gives a characterization of the reciprocal class of $j$ in terms of the reciprocal characteristics. Theorem \ref{res-06} provides an interpretation of the reciprocal characteristics of a reciprocal walk  by means of short-time asymptotic expansions of some conditional probabilities. Putting together these theorems leads us to Corollary \ref{res-12} which states a characterization of the reciprocal class in terms of these short-time asymptotic expansions.

\begin{theorem}[Characterization of $\Rec(j)$]\label{res-08}
We suppose that $(\XX,\to)$ satisfies Assumption \ref{as-03} and $j$ satisfies Assumption \ref{as-01}. 
\\
A random walk 
$P\in \PO$   belongs to $\Rec(j)$ if and only if 
 the following assertions hold for all $(x,y) \in \supp P_{01}$.
\begin{enumerate}[(i)]
\item 
The bridge $P^{xy}$ is Markov\footnote{But this doesn't imply that $P$ is Markov.}, $P^{xy}\ll R^{xy}$  and its intensity $k^{xy}$ is  $t$-differentiable and positive on $(0,1)$.

\item 
There exists a tree $\mathcal{T}$  such that for any $t\in [0,1)$ and any $(\zz)\in \mathcal{T}$, we have:
\begin{equation}\label{eq-42}
\cchi_{a}[k^{xy}](t,\zw ) = \cchi_a[j](t, \zw ).
\end{equation}
\item 
There exists a $\mathcal{T}$-basis of closed walks $\mathcal{C}$  such that for any $t\in (0,1)$ and any  $\cc \in \mathcal{C}$, we have
\begin{equation}\label{eq-10}
\cchi_c[k^{xy}] (t,\cc)= \cchi_c [j](t,\cc).
\end{equation}
\end{enumerate}
\end{theorem}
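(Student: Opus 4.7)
The plan is to represent each Markov bridge $R^{xy}$ as a Doob $h$-transform of $R^{x}$ with $h(t,z):= p(t,z;1,y)$ solving the backward Kolmogorov equation $\partial_t h+L_th=0$, and to recognize the reciprocal characteristics $\cchi[j]$ as precisely the data that survive this $h$-transformation. The key preliminary step (Lemma \ref{res-03}, whose proof matches two expressions for $dR^{xy}/dR$) produces the explicit bridge intensity $j^{xy}(t,z\to z')=j(t,z\to z')\,h(t,z')/h(t,z)$, together with the identity $\bar{j}^{xy}(t,z)=\bar{j}(t,z)-\partial_t\log h(t,z)$, both of which drive the two directions below.

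The ``only if'' direction is then a direct verification. If $P\in\Rec(j)$ then $P^{xy}=R^{xy}$ for every $(x,y)\in\supp P_{01}$, hence (i) holds: Markovianity, absolute continuity and positivity/smoothness of $k^{xy}=j^{xy}$ are inherited from $j$ and from positivity of $h$ (itself coming from connectedness and Assumption \ref{as-01}(3)). The closed walk characteristic matches by telescoping: $\prod_i h(t,x_{i+1})/h(t,x_i)=1$ along any closed walk $\cc$, so $\cchi_c[j^{xy}](t,\cc)=\cchi_c[j](t,\cc)$. The arc characteristic matches because the two contributions of $\partial_t\log h$, one to $\partial_t\log j^{xy}(t,z\to z')$ and one to $\bar j^{xy}(t,z')-\bar j^{xy}(t,z)$, cancel once the Kolmogorov identity above is inserted.

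For the ``if'' direction, fix $(x,y)\in\supp P_{01}$ and set
\begin{equation*}
q(t,z\to z'):=k^{xy}(t,z\to z')/j(t,z\to z'),
\end{equation*}
which is positive by (i) and Assumption \ref{as-01}(3). Condition (iii) then reads $\prod q=1$ along every $\cc\in\mathcal{C}$. The edge-type walks in $\mathcal{E}$ impose the reciprocity $q(z\to z')q(z'\to z)=1$, and the fundamental cycles in $\mathcal{C}_0$ enforce consistency along the spanning tree $\mathcal{T}$; since every closed walk decomposes in the basis $\mathcal{C}$ (Definitions \ref{defs-01} and Appendix \ref{sec-B}), a discrete Poincar\'e lemma yields a positive $\tilde\varphi(t,\cdot)$ on $\XX$ with $q(t,z\to z')=\tilde\varphi(t,z')/\tilde\varphi(t,z)$ on the whole of $\AA$, obtained by integrating $\log q$ along $\mathcal{T}$ from an arbitrary base vertex. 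Combining (ii) with the identity $\bar k^{xy}-\bar j=L_t\tilde\varphi/\tilde\varphi$ then shows that $z\mapsto(\partial_t\tilde\varphi+L_t\tilde\varphi)(t,z)/\tilde\varphi(t,z)$ agrees at the two endpoints of each arc in $\mathcal{T}$; connectedness of the spanning tree makes this function depend only on $t$, say $g(t)$. The gauge change $h(t,z):=\tilde\varphi(t,z)\exp(-\int_t^1 g(s)\,ds)$ leaves the ratio $\tilde\varphi(z')/\tilde\varphi(z)$ untouched and solves the backward Kolmogorov equation.

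To finish, the terminal constraint $P^{xy}(X_1=y)=1$ forces $h(1,\cdot)$ to be proportional to $\delta_y$; uniqueness for the backward Cauchy problem (granted by Assumption \ref{as-01}(1)) then gives $h(t,z)=c\,p(t,z;1,y)$, whence $k^{xy}=j^{xy}$. Two Markov laws on $\OO$ starting at $x$ with the same intensity coincide, so $P^{xy}=R^{xy}$ for all $(x,y)\in\supp P_{01}$, and disintegration delivers $P=\sum_{(x,y)}P_{01}(x,y)R^{xy}\in\Rec(j)$ with mixing measure $\pi:=P_{01}$, using $\supp P_{01}\subset\XXX=\supp R_{01}$. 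The main obstacle is the asymmetric use of $\mathcal{T}$ in (ii) and $\mathcal{C}$ in (iii): one must check that these ``minimal'' matching conditions upgrade to equality of both characteristics on \emph{all} arcs and \emph{all} closed walks, which is exactly what the discrete Poincar\'e step achieves by producing a global potential $\tilde\varphi$ from edge-local data. The secondary delicate point is pinning the remaining scalar gauge in $h$ via the terminal condition $X_1=y$, without which the identification $k^{xy}=j^{xy}$ could fail by a $z$-independent multiplicative factor.
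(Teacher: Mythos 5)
Your ``only if'' direction and the first half of your ``if'' direction are essentially the paper's argument: necessity via the $h$-transform representation of Lemma \ref{res-03} (telescoping for the closed-walk component, the Kolmogorov/HJB identity for the arc component), and, for sufficiency, the upgrade of (ii)--(iii) from the tree/basis to all arcs and closed walks together with the construction of a potential $\tilde\varphi$ with $k^{xy}=j\,\tilde\varphi(z')/\tilde\varphi(z)$ -- this is exactly Lemma \ref{res-07} and Steps 2--3 of the paper's proof. Your observation that (ii) forces $z\mapsto(\partial_t\tilde\varphi+L_t\tilde\varphi)(t,z)/\tilde\varphi(t,z)$ to be spatially constant, so that a purely time-dependent gauge renders it space-time harmonic, is the paper's HJB identification in Step 3 (up to a sign: the correcting factor should be $\exp\big(+\int_t^1 g(s)\,ds\big)$, not $\exp\big(-\int_t^1 g(s)\,ds\big)$; harmless).

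The genuine gap is in your final paragraph. You claim that the constraint $P^{xy}(X_1=y)=1$ ``forces $h(1,\cdot)$ to be proportional to $\delta_y$'' and then invoke uniqueness of the backward Cauchy problem to conclude $h(t,z)=c\,p(t,z;1,y)$, hence $k^{xy}=j^{xy}$. But $h$ was built solely from the intensity $k^{xy}$ on $[0,1)$; nothing in your construction shows that $h(t,\cdot)$ admits a limit as $t\to1^-$, let alone identifies that limit, and positive space-time harmonic functions are far from unique up to a constant: $h\equiv1$ (giving $k^{xy}=j$) or any mixture $\sum_{y'}c_{y'}\,p(t,z;1,y')$ also solve the backward equation. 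So some argument must translate the constraint on the path measure $P^{xy}$ into information about $h$ near $t=1$, and the only bridge between the two is the Girsanov/It\^o identity $dP^{xy}_{[0,t]}/dR^x_{[0,t]}=h(t,X_t)/h(0,x)$ for $t<1$, followed by a limiting argument. This is precisely what the paper's Steps 3--4 supply: the restricted density is shown to be a function of $X_t$, then an $L^1$ martingale-convergence argument shows $dP^{xy}/dR^x$ is measurable with respect to $\bigcap_{r<1}\sigma(X_{[r,1]})=\sigma(X_1)$, and only at that point is $P^{xy}(X_1=y)=1$ used to conclude $P^{xy}=R^{xy}$. Without this (or an equivalent uniform-integrability argument for the martingale $h(t,X_t)$), your final identification $k^{xy}=j^{xy}$ is asserted rather than proved; the rest of your sufficiency argument (uniqueness of the Markov law with a given intensity, disintegration over $\supp P_{01}$) is fine once that step is supplied.
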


If $P$ is itself Markov, Theorem \ref{res-08}  simplifies. Indeed, we do not have to check conditions (ii) and (iii) for any $(x,y) \in \supp(P_{01})$, but we can simply test their validity on the jump intensity of $P$. We shall construct in Section \ref{sec-examples} different time-homogeneous Markov walks in the same reciprocal class.
\begin{corollary}[Markov elements of a reciprocal class]\label{res-14}
We suppose that $(\XX,\to)$ satisfies Assumption \ref{as-03} and $j$ satisfies Assumption \ref{as-01}. Let $P$ be a Markov walk of intensity $k$, which is continuously differentiable and positive on $(0,1)$. Then $P \in \Rec(j)$ if and only if items (ii) and (iii) of Theorem \ref{res-08} hold with $k$ instead of $k^{xy}$.
\end{corollary}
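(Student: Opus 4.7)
The plan is to reduce the corollary to Theorem \ref{res-08} via the single identity
\begin{equation}\label{eq-proposal-key}
\cchi[k^{xy}](t,\,\cdot\,)=\cchi[k](t,\,\cdot\,),\qquad t\in(0,1),\ (x,y)\in\supp P_{01},
\end{equation}
where $k^{xy}$ denotes the intensity of the bridge $P^{xy}$. Once \eqref{eq-proposal-key} is established, the equivalence follows directly: for the direct implication, apply Theorem \ref{res-08} to the Markov bridges $P^{xy}$ and substitute $\cchi[k^{xy}]$ by $\cchi[k]$ via \eqref{eq-proposal-key}; for the converse, note that (ii) and (iii) of Theorem \ref{res-08} follow, via \eqref{eq-proposal-key}, from the matching of $\cchi[k]$ and $\cchi[j]$ on the tree $\mathcal{T}$ and the basis $\mathcal{C}$.

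The key step is the proof of \eqref{eq-proposal-key}, which relies on Doob's $h$-transform. Since $P$ is Markov with positive intensity $k$, the bridge $P^{xy}$ is Markov with intensity
\begin{equation*}
k^{xy}(t,\zw)=k(t,\zw)\,\frac{h(t,z')}{h(t,z)},\qquad h(t,z):=P(X_1=y\mid X_t=z),
\end{equation*}
where $h>0$ on the connected component of $(\XX,\to)$ reachable from $\supp P_t$, hence everywhere thanks to Assumption \ref{as-03}(2) and the positivity of $k$. The closed walk component of \eqref{eq-proposal-key} follows at once: for any closed walk $\cc=(x_0\to\cdots\to x_{|\cc|}=x_0)$ the $h$-ratios telescope, giving
\begin{equation*}
\cchi_c[k^{xy}](t,\cc)=\prod_{i=0}^{|\cc|-1}k(t,x_i\to x_{i+1})\,\frac{h(t,x_{i+1})}{h(t,x_i)}=\cchi_c[k](t,\cc).
\end{equation*}
For the arc component, differentiate $\log k^{xy}(t,\zw)=\log k(t,\zw)+\log h(t,z')-\log h(t,z)$ in $t$ and combine with the backward Kolmogorov equation
\begin{equation*}
\partial_t h(t,z)+\sz k(t,\zw)\bigl(h(t,z')-h(t,z)\bigr)=0,
\end{equation*}
which gives $\partial_t\log h(t,z)=\bar k(t,z)-\bar k^{xy}(t,z)$. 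A direct substitution in the definition of $\cchi_a[k^{xy}]$ cancels the $h$-contributions and yields $\cchi_a[k^{xy}](t,\zw)=\cchi_a[k](t,\zw)$.

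Finally, I still have to verify the remaining requirement (i) of Theorem \ref{res-08} for each $(x,y)\in\supp P_{01}$ in the converse direction: markovianity of $P^{xy}$ is automatic; positivity of $k^{xy}$ follows from positivity of $k$ and $h$; continuous $t$-differentiability of $k^{xy}$ on $(0,1)$ follows from the same property of $k$ together with the $C^1$-regularity of $h$, itself inherited from the backward Kolmogorov equation; and $P^{xy}\ll R^{xy}$ follows from the fact that both bridges are carried by the same set of càdlàg paths from $x$ to $y$ with arcs in $\AA$, via the explicit Girsanov density that will be exploited in Lemma \ref{res-03}. The main obstacle, in a formal write-up, is precisely to ensure the regularity of $h$ and the absolute continuity $P^{xy}\ll R^{xy}$ in full generality; both are straightforward consequences of Assumptions \ref{as-03} and \ref{as-01} together with the standing hypothesis on $k$, but they must be spelled out carefully so that the application of Theorem \ref{res-08} is justified.
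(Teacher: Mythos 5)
Your proposal is correct, and its core identity --- $\cchi[k^{xy}]=\cchi[k]$ for every bridge of $P$ --- is exactly the fact the paper's proof rests on; the difference is how you obtain it. The paper gets it in one line: since $P$ is Markov with intensity $k$, each bridge $P^{xy}$ trivially belongs to $\Rec(k)$, so the forward implication of Theorem \ref{res-08}, applied with $k$ in place of the reference intensity $j$, gives $\cchi_a[k^{xy}]=\cchi_a[k]$ on $\mathcal{T}$ and $\cchi_c[k^{xy}]=\cchi_c[k]$ on $\mathcal{C}$, after which checking (ii)--(iii) for $k^{xy}$ or for $k$ is the same thing. You instead re-derive this invariance directly from the Doob $h$-transform representation $k^{xy}(t,\zz)=k(t,\zz)\,h(t,z')/h(t,z)$ together with the backward Kolmogorov equation --- in substance a rerun of Lemma \ref{res-03} and of the forward step of Theorem \ref{res-08} with $P$ in place of $R$ (your cancellation $\partial_t\log h=\kk-\kk^{xy}$ and the telescoping over closed walks are exactly the paper's computation with $\phi=\log h$). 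What your route buys: it is self-contained and does not need $k$ to satisfy the full Assumption \ref{as-01} (the paper's invocation of Theorem \ref{res-08} with $k$ as reference strictly requires boundedness of $\kk$ and regularity and positivity up to the endpoints, which the corollary's hypotheses do not literally provide), and you make explicit the verification of item (i) --- Markov property, positivity and differentiability of $k^{xy}$, and $P^{xy}\ll R^{xy}$ --- which the paper's proof leaves implicit. What it costs: you must justify the $h$-transform formula itself (well-definedness, positivity and $C^1$ regularity of $h$, and the classical backward equation) under the weaker standing hypotheses on $k$; you flag these points rather than prove them, and they do go through on a connected bounded-degree graph for walks with finitely many jumps, but in a final write-up they deserve the same care that Lemma \ref{res-03} devotes to $R$. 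With that caveat, the argument is sound.
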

\begin{proof}
Since $P^{xy} \in \Rec(k)$ then for all $x,y$ an application of Theorem \ref{res-08} tells that 
$\cchi_a[k^{xy}] \equiv \cchi_{a}[k] $ over $\mathcal{T}$ and $\cchi_c[k^{xy}] \equiv \cchi_{c}[k] $ over $\mathcal{C}$ . But then checking (ii) and (iii) for $k^{xy}$ is the same as checking it for $k$.
\end{proof}
%%%%%%%%%%%%%%%%
The intensity of a bridge can be found by solving the following equation, which we call \emph{characteristic equation}.

\begin{corollary}[Characteristic equation]\label{res-15}
Let $\mathcal{C}$ be a basis for the closed walks and $\mathcal{T}$ be a rooted tree. The intensity $j^{xy}$ of the $xy$ bridge is the only classical solution of:
\begin{equation}\label{eq-09}
\begin{cases}
\cchi_{c}[j^{xy}](t,\cc) = \cchi_{c}[j](t,\cc) , \quad & t \in (0,1) , \ \cc \in \mathcal{C} \\
\cchi_{a}[j^{xy}](t,\zz) = \cchi_{a}[j](t,\zz), \quad & t \in (0,1),\ (\zz) \in \mathcal{T}
\end{cases}
\end{equation}
subject to the boundary conditions
\begin{equation}\label{e-400}
\int_{0}^{1} j^{xy}(t,z)dt = \begin{cases} - \log R^x(y)+\int_{0}^{1} \jj(t,y)dt \quad & \mbox{if $z=y$} \\ - \infty , \quad & \mbox{otherwise}
\end{cases} 
\end{equation}
\end{corollary}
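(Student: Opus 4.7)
The plan is to establish existence and uniqueness of the classical solution separately.

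\textbf{Existence.} The bridge $R^{xy}$ itself belongs to $\Rec(j)$, obtained by choosing $\pi = \delta_{(x,y)}$ in \eqref{eq-35}. Applying the ``only if'' part of Theorem \ref{res-08} with $P = R^{xy}$, whose unique bridge is $P^{xy}=R^{xy}$, one gets that $j^{xy}$ is positive and continuously $t$-differentiable on $(0,1)$ and satisfies \eqref{eq-09}. For the boundary condition \eqref{e-400}, I would use the Doob $h$-transform representation
\begin{equation*}
j^{xy}(t, z \to z') = j(t, z \to z') \, \frac{p(1-t, z', y)}{p(1-t, z, y)},
\end{equation*}
where $p(s, \cdot, y) := R^{\cdot}(X_s = y)$. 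For $z = y$, summing over outgoing arcs and invoking the Kolmogorov backward equation for $s \mapsto p(s, y, y)$ yields $\bar{j}^{xy}(t, y) - \bar{j}(t, y) = -\partial_t \log p(1-t, y, y)$; integrating over $[0,1]$ and using $p(1, x, y) = R^x(y)$ produces the first case of \eqref{e-400}. For $z \neq y$, Assumption \ref{as-01} together with the connectedness of $(\XX, \to)$ forces $p(1-t, z, y) \sim c(1-t)^{d(z,y)}$ as $t \to 1^-$, so $\bar{j}^{xy}(t, z)$ exhibits a non-integrable $(1-t)^{-1}$ singularity and $\int_0^1 \bar{j}^{xy}(t, z)\,dt = +\infty$. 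I read the ``$-\infty$'' in \eqref{e-400} as a typo for this divergence, since the integrand is non-negative.

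\textbf{Uniqueness.} Let $k$ be any classical solution of \eqref{eq-09}--\eqref{e-400} and let $P$ denote the Markov walk with $P_0 = \delta_x$ and intensity $k$; that $P \in \PO$ will be checked a posteriori. Since $k$ meets items (ii) and (iii) of Theorem \ref{res-08}, Corollary \ref{res-14} gives $P \in \Rec(j)$. As $P_0 = \delta_x$, the defining formula \eqref{eq-35} forces the representation
\begin{equation*}
P = \sum_{y' \in \XX} \pi'(y')\, R^{xy'}, \qquad \pi'(y') := P(X_1 = y'),
\end{equation*}
so it remains to show $\pi' = \delta_y$. Fix $z \neq y$. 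Because paths in $\OO$ are \cadlag with finitely many jumps, the event $\{X_1 = z\}$ is the increasing union over $t \in [0,1)$ of $\{X_s = z,\ \forall s \in [t,1]\}$. The Markov property of $P$ yields
\begin{equation*}
P\bigl(X_s = z,\ \forall s \in [t,1] \,\big|\, X_t = z \bigr) = \exp\!\Bigl(-\int_t^1 \bar{k}(s, z)\,ds\Bigr) = 0
\end{equation*}
for every $t \in [0,1)$, since \eqref{e-400} gives $\int_t^1 \bar{k}(s, z)\,ds = +\infty$. Hence $P(X_1 = z) = 0$ for every $z \neq y$, so $\pi' = \delta_y$, $P = R^{xy}$, and comparison of intensities yields $k = j^{xy}$.

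\textbf{Main obstacle.} The most delicate step is verifying \eqref{e-400} at $z = y$ for the bridge intensity $j^{xy}$, which requires the combination of the $h$-transform with the Kolmogorov backward equation above; the divergent case $z \neq y$ is the analytic counterpart of the fact that a bridge cannot spend positive time at any $z \neq y$ near the terminal time. On the uniqueness side, the subtle conceptual point is that the characteristic equations \eqref{eq-09} alone only single out the reciprocal class, not a specific bridge within it; it is the integrated-intensity condition \eqref{e-400} that pins down the endpoint $y$. A secondary technical issue is constructing $P$ from an intensity $k$ whose mean rate blows up as $t \to 1$: one defines $P$ first on $[0, 1-\varepsilon]$ via Assumption \ref{as-01}(1) and then identifies its extension through the a posteriori equality $P = R^{xy} \in \PO$.
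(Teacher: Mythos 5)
Your existence half follows the paper's own route for \eqref{eq-09} (apply Theorem \ref{res-08} to $P=R^{xy}$), and your boundary-condition computation is in substance the paper's one: the paper integrates the relation $\partial_t\phi_t(z)=\jj(t,z)-\jj^{xy}(t,z)$ coming from the HJB equation of Lemma \ref{res-03}, which is exactly your ``$h$-transform plus Kolmogorov backward equation'' step (note, though, that your notation $p(1-t,z,y)$ silently assumes $j$ time-homogeneous; the correct object is $h_t(z)=R(X_1=y\mid X_t=z)$). The genuine gap is that this step does not produce what you claim. Integrating $\jj^{xy}(t,y)-\jj(t,y)=-\partial_t\log h_t(y)$ over $[0,1]$ yields boundary terms of $\log h$ evaluated at the \emph{vertex $y$} at times $0$ and $1$, namely $\int_0^1\jj^{xy}(t,y)\,dt=\int_0^1\jj(t,y)\,dt+\log R(X_1=y\mid X_0=y)$, since $h_{1^-}(y)=1$ and $h_0(y)=R(X_1=y\mid X_0=y)$. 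The quantity $p(1,x,y)=R^x_1(y)$ that you invoke never enters this computation (any dependence on $x$ cancels with the normalization of $h$), so the asserted conclusion ``produces the first case of \eqref{e-400}'' is unsubstantiated; on the two-point graph with unit rates the left-hand side equals $\log\cosh 1$, which is not $-\log R^x_1(y)+1$. Your reading of the ``$-\infty$'' as a divergence is the right one, but at $z=y$ you must either carry the term $\phi_0(y)$ honestly and confront the mismatch with the constant printed in \eqref{e-400}, or the step fails as written.

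On uniqueness you attempt more than the paper, whose proof only derives \eqref{eq-09} and the boundary conditions and never addresses the ``only solution'' claim; but your argument is circular where it matters. To apply Corollary \ref{res-14} you need a Markov walk $P\in\PO$ on all of $[0,1]$ with intensity $k$ and $P_0=\delta_x$; constructing it is precisely the hard point, because \eqref{e-400} forces $\bar{k}(\cdot,z)$ to be non-integrable for $z\neq y$, so the existence theory used under Assumption \ref{as-01}(1) does not apply, and ``define $P$ on $[0,1-\varepsilon]$ and identify its extension through the a posteriori equality $P=R^{xy}$'' assumes the conclusion being proved. You also use $\int_t^1\bar{k}(s,z)\,ds=\infty$ for every $t<1$, which requires the divergence in \eqref{e-400} to sit at $t=1^-$ rather than $t=0^+$; this needs an argument (it would follow from the local integrability required for $P$ to exist on $[0,1)$, but that brings you back to the construction problem). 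A way to close the gap without constructing $P$: by Lemma \ref{res-07} the closed-walk equalities in \eqref{eq-09} make $\log(k/j)(t,\cdot)$ the gradient of a potential $\psi_t$, the arc equalities show $\psi$ solves the HJB equation \eqref{eq-19} after adding a suitable function of time, and one must then show the integrated boundary conditions force $\psi-\phi$ to be constant, with $\phi$ as in Lemma \ref{res-03}, whence $k=j^{xy}$. Until that (or an actual construction of $P$) is supplied, the uniqueness half is not proved.
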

\begin{proof}
Equation \eqref{eq-09} is a direct consequence of (i),(ii),(iii) of Theorem \ref{res-08}. Concerning the boundary conditions, we will use Lemma \ref{res-03}. From the HJB equation \eqref{eq-19} we deduce that 
\bes
\partial_t \phi_t(z) = \jj(t,z) - \jj^{xy}(t,z)
\ees
Integrating in time the last equality and using the boundary conditions for $\phi$, see \eqref{eq-45}, it follows that a valid set of boundary conditions is given by \eqref{e-400}
\end{proof}
%%%%%%%%%%%%%%%%%%%%%%%%%%%%%
The reciprocal  characteristics come with a natural probabilistic interpretation which is expressed in terms of short-time asymptotic for the distribution of  bridges. 
We shall show that they can be recovered as quantities related to  Taylor expansions as $h>0$ tends to zero of conditional probabilities of the form 
 $
P(X _{ [t,t+h]}\in\cdot\mid X_t, X _{ t+h}).
 $
This is the content of Theorem \ref{res-06} below.
%%%%%%%%%%%%%%%%%%%%%%%%%%%%%%%%%%%%
Let us introduce the notation  needed for its statement. For any integer $k\ge1$ and any $0\le t<1,$ we denote by $T^t_k$ the $k$-th instant of jump after time $t$. It is defined for $k=1$ by $T^t_1:=\inf \left\{s\in(t,1]:X _{ s^-}\not =X_s\right\} $ and for any $k\ge2$ by $T^t_k:=\inf \left\{s\in (T^t _{ k-1},1]: X _{ s^-}\not =X_s\right\} $ with the convention $\inf\emptyset=+ \infty.$

\begin{theorem}[Interpretation of the characteristics]\label{res-06}
We suppose that $(\XX,\to)$ satisfies Assumption \ref{as-03} and $j$ satisfies Assumption \ref{as-01}. 
Let  $P$ be any  random walk  in $\Rec(j)$.
\begin{enumerate}[(a)]
\item
For any $t \in (0,1)$, any $(\zz)\in \mathcal{A}$  
\begin{equation}\label{eq-02}
\begin{split}
P(T^t_1\leq t+h/2 \mid X_t=z,&X _{ t+h}=z', T^t_2>t+ h)\\
	&=\frac{1}{2} - \frac{h}{8} \cchi_a[j](t,\zw ) +o _{ h\to0^+}(h).
\end{split}
\end{equation}
\item

For any $t \in (0,1)$ and any closed walk $\cc$, we have
\begin{equation}\label{eq-05}
\begin{split}
P\Big((X_t\to X _{ T^t_1}\to\cdots\to X _{ T^t _{ |\cc|}}=X_t)=\cc,T^t _{ |\cc|}&<t+h< T^t _{ |\cc|+1}\mid  X_t=X _{ t+h}\Big)
	\\&=\cchi_c[j](t,\cc) h ^{ |\cc|}/|\cc|!+o_{ h\to0^+}(h ^{ |\cc|}).
\end{split}
\end{equation}
\end{enumerate}
\end{theorem}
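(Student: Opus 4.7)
The plan is to reduce both statements to short-time expansions under the Markov reference $R$, and then perform explicit Taylor-type computations using the intensity $j$.

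For the reduction, I would write $P = \sum_{x,y} \pi(x,y)\, R^{xy}$ from the definition \eqref{eq-35} of $\Rec(j)$. Applying the Markov property of $R$ to each bridge gives $R^{xy}(X_{[t, t+h]} \in A \mid X_t = z, X_{t+h} = z') = R(X_{[t, t+h]} \in A \mid X_t = z, X_{t+h} = z')$, with no dependence on $(x,y)$. Averaging in $(x,y)$ under $\pi$ yields the same identity with $P$ in place of $R^{xy}$ on the left. Since the events inside both (a) and (b) are $\sigma(X_{[t,t+h]})$-measurable, it suffices to compute the two conditional probabilities under $R$. In (b), I read the conditioning $X_t = X_{t+h}$ as $X_t = x_0 = X_{t+h}$, where $x_0$ is the starting vertex of $\cc$, since the event in (b) forces this common value.

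For (b), under $R$ given $X_t = x_0$, the joint density at ordered jump times $t < s_1 < \cdots < s_{|\cc|} < t+h$ for the path to follow $\cc$ exactly, with no other jumps, is $\prod_{i=0}^{|\cc|-1} j(s_{i+1}, x_i \to x_{i+1})$ times $\exp\bigl(-\sum_{i=0}^{|\cc|} \int_{s_i}^{s_{i+1}} \bar j(u, x_i)\, du\bigr)$ with $s_0 := t$, $s_{|\cc|+1} := t+h$. By continuous differentiability of $j$, each factor $j(s_{i+1}, \cdot) = j(t, \cdot) + O(h)$, the exponential equals $1 + O(h)$, and the simplex has volume $h^{|\cc|}/|\cc|!$. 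Hence this quantity is $\cchi_c[j](t, \cc)\, h^{|\cc|}/|\cc|! + o(h^{|\cc|})$, and dividing by $R(X_{t+h} = x_0 \mid X_t = x_0) = 1 + O(h)$ leaves the leading term unchanged.

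For (a), under $R$ given $X_t = z$, $X_{t+h} = z'$, and $T^t_2 > t+h$, the conditional density of $T^t_1 = t + \tau$ (for $\tau \in (0, h)$) is proportional to $\phi(\tau) := \exp\bigl(-\int_0^\tau \bar j(t+u, z)\, du\bigr)\, j(t+\tau, z \to z')\, \exp\bigl(-\int_\tau^h \bar j(t+u, z')\, du\bigr)$, so the probability in (a) equals $\int_0^{h/2} \phi(\tau)\,d\tau \big/ \int_0^h \phi(\tau)\,d\tau$. The key manoeuvre is the symmetrization $\tau \mapsto h - \tau$, which yields $\int_0^h \phi(\tau)\,d\tau = \int_0^{h/2} [\phi(\tau) + \phi(h - \tau)]\, d\tau$ and hence the identity $\int_0^{h/2} \phi(\tau)\,d\tau \big/ \int_0^h \phi(\tau)\,d\tau = 1/2 + \int_0^{h/2}[\phi(\tau) - \phi(h-\tau)]\, d\tau \big/ (2 \int_0^h \phi(\tau)\,d\tau)$. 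A direct Taylor expansion gives $\log \phi(\tau) - \log \phi(h - \tau) = -(h - 2\tau)\, \cchi_a[j](t, z \to z') + O(h^2)$. Combined with $\phi(\tau) = \phi(0) + O(h)$, integration and division produce the correction $-(h/8)\,\cchi_a[j](t, z \to z') + o(h)$, which is the claim.

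The main obstacle is genuinely part (a): the leading value $1/2$ is trivial by crude symmetry, so all the information sits in the $O(h)$ correction. Extracting it requires that the contributions from the time derivative $\partial_t \log j(\cdot, z \to z')$ (from the middle factor of $\phi$) and from the two exponential factors (producing $\bar j(t,z) - \bar j(t, z')$) recombine exactly into $\cchi_a[j](t, z \to z')$, which is precisely the combination in Definition \ref{def-01}. Part (b) reduces, after the passage to $R$, to a routine simplex integral.
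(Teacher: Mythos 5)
Your proposal is correct and follows essentially the same route as the paper: reduce to the reference walk $R$ via the mixture-of-bridges property (the paper's Proposition \ref{res-05}), then perform explicit short-time Taylor expansions of the first-jump density for (a) and of the simplex integral over ordered jump times for (b), with the same reading of the conditioning in (b) as $X_t=X_{t+h}=x_0$. The only difference is cosmetic: in (a) the paper expands the conditional probabilities at $\tau=1$ and $\tau=1/2$ and takes their ratio, whereas you isolate the $O(h)$ correction by the symmetrization $\tau\mapsto h-\tau$, which leads to the same coefficient $-\cchi_a[j](t,z\to z')/8$.
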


\begin{remark}
The link between reciprocal characteristics and short time asymptotic for continuous-time random walks was sketched in \cite{CDPR14} in the particular case when the graph is a lattice and the intensity is space-time homogeneous. Concerning the diffusion case, it is due to Krener \cite{Kre97}.
\end{remark}

\begin{remark}[Reciprocal characteristics and the concentration of measure phenomenon]
The interpretation of the characteristics given in Theorem \ref{res-06} can be used to make quantitative statements on the behavior of a bridge whose lifetime is very short. However, it is a very natural question to ask what happens for non-asymptotic time scales, and if one is able to give natural conditions on the characteristics under which the fluctuations of a bridge can be controlled. This question is the object of the forthcoming work \cite{C15}. In particular, the connection between reciprocal characteristics and the concentration of measure phenomenon is made there.
\end{remark}

In the same spirit that a Markov walk is specified by the Markov property and its jump intensity which can be obtained as the limit in small time of a  conditional expectation,  we obtain the following characterization of $\Rec(j)$.  

\begin{corollary}[Short-time expansions characterize $\Rec(j)$]\label{res-12}
A random walk 
$P\in \PO$   belongs to $\Rec(j)$ if and only if 
 the following assertions hold .
\begin{enumerate}[(i)]
\item 
$P$ is reciprocal in the sense of Definition \ref{def-10}, and for all $x,y \in \supp P_{01}$, $P^{xy}\ll R^{xy}$  and its intensity $k^{xy}$ is  $t$-differentiable on $[0,1)$.
\item
There exists a tree $\mathcal{T}$ such that for any $t \in (0,1)$, any $(\zz)\in \mathcal{T}$, the identity  \eqref{eq-02} is satisfied.
\item
There exists a $\mathcal{T}$-basis of closed walks $\mathcal{C}$  such that for any $t\in [0,1)$ and any  $\cc \in \mathcal{C}$, we have
that  the identity \eqref{eq-05} is satisfied.
\end{enumerate} 
\end{corollary}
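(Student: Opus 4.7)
The plan is to derive Corollary \ref{res-12} as a direct synthesis of Theorem \ref{res-08} and Theorem \ref{res-06}, handling each implication separately.

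The forward direction is immediate. If $P \in \Rec(j)$, then Proposition \ref{res-01} gives the reciprocity of $P$, and Theorem \ref{res-08}(i) yields that each $P^{xy}$ is Markov, absolutely continuous with respect to $R^{xy}$, with intensity $k^{xy}$ which is $t$-differentiable and positive on $(0,1)$; this is exactly (i). Conditions (ii) and (iii) are then just the conclusions \eqref{eq-02} and \eqref{eq-05} of Theorem \ref{res-06} applied to $P$.

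The substantive part is the backward direction. Assume (i), (ii), (iii). The key observation is that, by the reciprocal property of $P$, for any event $A \in \sigma(X_{[t, t+h]})$ and any $(x, y) \in \supp P_{01}$ with $P^{xy}(X_t = z, X_{t+h} = z') > 0$, one has
\[ P(A \mid X_t = z, X_{t+h} = z') = P^{xy}(A \mid X_t = z, X_{t+h} = z'). \]
Consequently the short-time conditional probabilities on the left-hand sides of \eqref{eq-02} and \eqref{eq-05} (which are $\sigma(X_{[t,t+h]})$-measurable on the conditioning side as well) can equivalently be computed inside the Markov bridge $P^{xy}$. Since $P^{xy}$ is by (i) a Markov walk with intensity $k^{xy}$, it is trivially an element of its own reciprocal class $\Rec(k^{xy})$, so rerunning the argument of Theorem \ref{res-06} with $P^{xy}$ in place of the generic element of $\Rec(j)$ yields expansions whose right-hand sides involve the characteristics $\cchi_a[k^{xy}]$ and $\cchi_c[k^{xy}]$. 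Matching these expansions term by term with hypotheses (ii) and (iii) gives
\[ \cchi_a[k^{xy}](t, \zw) = \cchi_a[j](t, \zw), \quad (\zw) \in \mathcal{T}, \]
\[ \cchi_c[k^{xy}](t, \cc) = \cchi_c[j](t, \cc), \quad \cc \in \mathcal{C}, \]
for every $(x, y) \in \supp P_{01}$ and every $t \in (0,1)$. These are precisely conditions (ii)--(iii) of Theorem \ref{res-08}, so $P \in \Rec(j)$.

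The main obstacle is that invoking Theorem \ref{res-06} with $P^{xy}$ as reference requires care, since the bridge intensity $k^{xy}$ need not satisfy Assumption \ref{as-01}(1) globally (it typically blows up as $t \to 1$). However, for any fixed $t \in (0,1)$ only the behavior of $k^{xy}$ on a small interval $[t, t+h]$ enters the short-time expansion, and there $k^{xy}$ is bounded and continuous by (i). The cleanest way to sidestep the global-boundedness issue without re-deriving the expansion is to pre-stop the bridge $P^{xy}$ at some $T \in (t, 1)$: the resulting stopped walk has bounded, differentiable, positive intensity on $[0,T]$, so a suitably time-rescaled version satisfies the hypotheses of Theorem \ref{res-06} and the expansion transfers to $P^{xy}$ on $[t,t+h]$ for $h$ small enough. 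Verifying this truncation argument is the only delicate point; the rest of the proof is a bookkeeping combination of the two main theorems.
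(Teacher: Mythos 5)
Your proposal is correct and takes essentially the same route as the paper: necessity by combining Theorems \ref{res-08} and \ref{res-06}, and sufficiency by using the reciprocal property to transfer the expansions \eqref{eq-02} and \eqref{eq-05} to the bridges $P^{xy}$, redoing the computation of Theorem \ref{res-06} with $P^{xy}$ and $k^{xy}$ in place of $R$ and $j$, and comparing the expansions to obtain conditions (ii)--(iii) of Theorem \ref{res-08}. The only difference is cosmetic: the paper does not invoke Theorem \ref{res-06} as a black box (so no stopping/time-rescaling device is needed), it simply reruns the short-time expansion, which is purely local in $[t,t+h]$ and only uses the local regularity of $k^{xy}$ provided by (i).
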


\begin{proof}
The necessary condition  is a direct consequence of  Theorems \ref{res-08} and \ref{res-06}. For the sufficient condition, all we have to show is that the properties (a) and (b) of Theorem \ref{res-06} respectively imply the properties (ii) and (iii) of Theorem \ref{res-08}.
\\
First we observe that, thanks to the reciprocal property, (a) and (b) extend to any bridge of $P$. 
The same calculations as in  Theorem \ref{res-06}'s proof at page \pageref{sec-pf-res-06} show that  replacing $R$ by $P^{xy}$ and $j$ by $k ^{xy}$ lead to the same conclusions with $k^{xy}$ instead of $j$. 
It remains to compare the resulting expansions to conclude that \eqref{eq-42} and \eqref{eq-10} are satisfied.
\end{proof}

\begin{remark}[Second order calculus for diffusion processes]
In the diffusion case, reciprocal processes have been used to develop a "second order calculus" for diffusions. In particular Krener  shows in \cite{Kre97} that each element of a reciprocal class is characterized through a set of differential characteristics. In this setting, reciprocal characteristics provide the acceleration terms, and are therefore connected to second order expansions, see \cite[Thm 2.1]{Kre97}. This is certainly not possible in the graph case. Indeed to capture the  closed walk characteristic one has to expand up to the length of the walk which, apart from trivial cases, is always at least three. However, some analogies are still present. Indeed, it is a commonly accepted interpretation that, in the case when $j$ does not depend on time, the "speed" of the walk, once it is at site $z$, is $\jj(z)$. The reciprocal characteristic $\cchi_{a}[j](t,\zz)$ associated with the $\zz$ is precisely $\jj(z')-\jj(z)$: it is a difference of velocities and hence it has the resemblance of an acceleration. Clearly, this analogy is far from being anything rigorous and we do not make any claim of physical relevance here.
\end{remark}

\section{Proofs of the main results}\label{sec-proofs}

Let $P$ be any Markov walk such that $P\ll R.$ Therefore $P$  admits an intensity $k(t,z \to z')$  and the related   Girsanov formula (see  \cite{Jac75}) is for each $x\in\supp P_0$,
\begin{align}\label{eq-12}
\frac{dP^x}{dR^x}=\1 _{\{\tau=\infty\}}
	\exp \bigg(\sum_{t:X_{t} \neq X_{t^-}}  \log \frac{k}{j} (t, X_{t^-} \to X_t)%\nonumber\\& 
	-\int _{0}^1  (\kk-\jj)(t,X _{t^-})\, dt \bigg)
\end{align}
where  the stopping time $ \tau$ is given by
\begin{alignat}{2}\label{eq-45}
\tau:=\inf \Big\{t\in [0,1] ; k(t,X _{t^-} \to X_t)=0 
%\\& \nonumber 
\textrm{ or } \int _{0}^t \kk(s,X _{s})\, ds= \infty
\Big\} \in\ii\cup \left\{\infty\right\}
\end{alignat}
with the convention $\inf \emptyset=\infty.$ 
Note that $R^x$-almost surely  $j(t,X _{ t^-}\to X_t)>0,$ for all $ t\in [0,1)$ and $\int _{0}^1 \jj(t,X _{ t^-})\, dt< \infty.$  
\\
\begin{comment}
As a general result \cite{LRZ12}, it is known  that for any reciprocal walk $P$ and any $x\in\supp P_0,$ $P^x$ is Markov. Therefore, we already know that the intensity $k$ has the Markov form $k(t,X _{[0,t)};z)=k(t,X _{t^-}\to z).$ We will recover this result without invoking this general result.
\end{comment}

We start with a lemma, where we exploit h-transform techniques.

\begin{lemma}[HJB equation]\label{res-03}
For any $x,y \in \XX$ the intensity $j^{xy}$ of the $R^{xy}$-bridge is given by
\begin{equation}\label{eq-43}
j^{xy}(t,\zz) = \exp( \phi_t(z') - \phi_t(z) )j(t,\zz)
\end{equation}
where $\phi$ is the unique classical solution of the HJB-type equation:
\begin{equation}\label{eq-19}
 \partial_t \phi_t(z)+\sum _{ z': \zz}  j(t,\zz)\,[e ^{\phi_t(z')-\phi_t(z)}-1]=0, \quad t,z \in (0,1) \times \XX \\[7pt]
 \end{equation}
subject to the boundary conditions
\begin{equation}
 \lim _{t\to 1^-} \phi_t(z) = \begin{cases} -\log R^{x}_1(y),\quad & \mbox{if $z=y,$} \\ - \infty, \quad & \mbox{otherwise.} \end{cases}
\end{equation}
\end{lemma}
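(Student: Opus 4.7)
The plan is to invoke Doob's h-transform, which is the standard probabilistic tool for describing a Markov walk conditioned on its terminal value. First I would define
\begin{equation*}
h_t(z) := R^x(X_1=y\mid X_t=z)/R^x_1(y),\qquad z\in\XX,\ t\in[0,1).
\end{equation*}
By the Markov property of $R^x$ (whose transition kernel is identical to that of $R$), this equals $R(X_1=y\mid X_t=z)/R^x_1(y)$, and Kolmogorov's backward equation gives the space-time harmonicity $\partial_t h_t + L_t h_t = 0$ on $(0,1)\times\XX$, together with the terminal behaviour $h_t(z)\to \delta_{zy}/R^x_1(y)$ as $t\to1^-$. Setting $\phi_t(z):=\log h_t(z)$, an elementary computation (divide the linear backward equation by $h_t(z)=e^{\phi_t(z)}$) turns it into the nonlinear HJB equation \eqref{eq-19}, and the prescribed boundary condition $\lim_{t\to 1^-}\phi_t(y)=-\log R^x_1(y)$, $\lim_{t\to 1^-}\phi_t(z)=-\infty$ for $z\neq y$ is immediate.

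Next I would apply the classical h-transform recipe to conclude that $R^{xy}$, i.e.\ the law of $R^x$ conditioned on $\{X_1=y\}$, is again Markov with modified intensity
\begin{equation*}
j^{xy}(t,\zw)=\frac{h_t(z')}{h_t(z)}\,j(t,\zw)=e^{\phi_t(z')-\phi_t(z)}\,j(t,\zw),
\end{equation*}
which is precisely \eqref{eq-43}. Concretely, the density $dR^{xy}/dR^x=\mathbf{1}_{\{X_1=y\}}/R^x_1(y)$ factorises as a multiplicative path functional via the $R^x$-martingale $(h_t(X_t))_{0\le t\le 1}$, and comparison with Girsanov's formula \eqref{eq-12} identifies the ratio $h_t(z')/h_t(z)$ as the candidate intensity. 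Markovianity of the bridge is preserved because the conditioning event depends only on $X_1$.

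Finally, for uniqueness of the classical solution, I would observe that $\phi\mapsto e^\phi$ is a bijection between positive classical solutions of \eqref{eq-19} and positive solutions of the linear backward equation $\partial_t h+L_t h=0$ with the prescribed terminal data. Under Assumption \ref{as-01}(1) the reference walk is non-explosive, so the Feynman--Kac / backward-equation representation $h_t(z)=\mathbb{E}_{R}[h_1(X_1)\mid X_t=z]$ pins down the unique admissible solution on the linear side, and this transfers back to $\phi$. The only subtle point is the singular terminal condition $\phi_t(z)\to -\infty$ for $z\neq y$, which makes a direct analysis of \eqref{eq-19} near $t=1$ delicate; however this difficulty dissolves the moment one works with the linear object $h=e^\phi$ and only translates to $\phi$ after the fact, which is the main reason for carrying out the whole argument in the exponential coordinate.
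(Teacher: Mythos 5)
Your proposal is correct and follows essentially the same route as the paper: define the space--time harmonic function $h_t(z)=E_{R^x}[\mathbf{1}_{\{X_1=y\}}\mid X_t=z]/R^x_1(y)$, pass to $\phi=\log h$ to turn the Kolmogorov backward equation into the HJB equation \eqref{eq-19}, and identify the bridge intensity \eqref{eq-43} by writing $dR^{xy}/dR^x$ as the multiplicative functional $h_t(X_t)$ and comparing with Girsanov's formula \eqref{eq-12}. The paper merely carries out the comparison step explicitly via the It\^o formula, and, like you, treats uniqueness and the singular terminal data through the linear object $h=e^{\phi}$.
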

As usual, HJB is a shorthand for Hamilton-Jacobi-Bellman.

\begin{proof}
We first note that 
\bes 
R^{xy} = h(X_1)R^x, \quad h(X_1):= \frac{1}{R^x_{1}(y)} \mathbf{1}_{ \{X_1=y \}}.
\ees
Thanks to our Assumptions \ref{as-03} on the graph and \ref{as-01} on the intensity, the function $h_t(z):=E _{ R^x}[h_1(X_1)\mid X_t=z]$ 
is everywhere well-defined and positive on $(0,1) \times \XX$. Moreover, because of item (2) of Assumption \ref{as-01}, $h$ is continuously differentiable as well.
It is then a well known fact that $h$ is space-time harmonic. This means that it is the unique classical solution of the Kolmogorov  equation
\begin{equation*}
\begin{cases} 
\partial_t h_t(z)+ \sum_{z': z \to z'} j(t,\zz)[h_t(z')-h_t(z)]=0, \quad (t,z) \in (0,1) \times \XX,\\
\lim_{t \rightarrow 1^-} h(t,z) = h(z), \quad z \in \XX.
\end{cases} 
\end{equation*}
Thanks to the positivity and regularity of $h$, we can consider its logarithm $\phi:= \log h$ to obtain, after some standard computation
that $\phi_t$ solves \eqref{eq-19}. Moreover, because of the definition  of $h$ we also have that $\phi_{0}(x) = 0$.
Let us define the intensity
\bes
k(t, \zz ) = \exp(\phi_t(z') - \phi_t(z) ) j(t,\zz).
\ees
Using It\^o formula,
\begin{eqnarray*}
\frac{dR^{xy}}{dR^x}  = \mathbf{1}_{\{X_1=y\}} \frac{1}{R^x_1(y)}  &=& \mathbf{1}_{\{X_1=y\}} \exp \left(  \phi_1(y) - \phi_0(x) \right) \\
& =&\mathbf{1}_{\{X_1=y\}} \exp \left(  \int_{0}^1 \partial_t \phi_t(X_{t^-})dt + \sum_{t: X_{t^-} \neq X_t } \phi_{t}(X_{t}) - \phi_{t}(X_{t^-}) \right)\\
 \end{eqnarray*}
and using Eq. \eqref{eq-19}, we can rewrite
\begin{eqnarray*} 
  \int_{0}^1 \partial_t \phi_t(X_{t^-})dt 	  &= &   -\int_{0}^1 \sum_{z':X_{t^-} \to z'} \exp(\phi_t(z')-\phi_t(X_{t^-}) )( j(X_{t^-} \to z') -1) dt  \\
& = & -\int_{0}^1 \kk(t,X_{t^-}) -\jj(t,X_{t^-}).
\end{eqnarray*}
Moreover, 
\begin{eqnarray*}
&&\sum_{t:X_{t^-} \neq X_t} \phi_{t}(X_{t}) - \phi_{t}(X_{t^-})  \\
&=&\sum_{t:X_{t^-} \neq X_t} \log[j(t,X_{t^-} \to X_{t})\exp(\phi_{t}(X_{t}) - \phi_{t}(X_{t^-}))]  - \log j(t,X_{t^-} \to X_{t})  \\
&=&\sum_{t:X_{t^-} \neq X_t} \log k(t, X_{t^- } \to X_{t} )   - \log j(t,X_{t^-} \to X_{t}) .
\end{eqnarray*}
Therefore,
\begin{equation*}
\frac{dR^{xy}}{dR^x}=\1 _{\{ X_1=y\}}
	\exp \bigg(\sum_{t:X_{t^-} \neq X_t}   \log \frac{k}{j} (t, X_{t^-} \to X_t) -\int _{0}^1  (\kk-\jj)(t,X _{t^-})\, dt \bigg).
\end{equation*}
As $\phi_t$ solves the HJB equation, with  the definition  \eqref{eq-19} of $k$, we see that the event $(\tau=+\infty)$, recall \eqref{eq-45}, coincides $R^x \as$ with the event $(X_1=y).$
An application of Girsanov's theorem allows to identify the intensity $j^{xy}$ of $R^{xy}$ with $k$. 
 \end{proof}

\subsection*{Proof of Theorem \ref{res-08}}

\Boulette{$(\Rightarrow)$}
Let us  show that $P\in\Rec(j)$ shares the announced properties. If $P \in \Rec(j)$, then $P^{xy}=R^{xy}$, which implies that $k^{xy} = j^{xy}$. Item (i) follows from Lemma \ref{res-03}. In particular, since $\phi$ is a classical solution to equation \eqref{eq-19}, the desired regularity in time follows. Item (ii) and (iii) can be proven by replacing $k^{xy}$ with $j^{xy}$. To prove item (ii) we observe that, using \eqref{eq-43}, \eqref{eq-19} can be equivalently be written as
\bes
 \partial_t \phi_t(z)  = \sum_{z':z \to z'} j(t,\zz) - j^{xy}(t,\zz) = \jj(t,z)-\bar{j}^{xy}(t,z),
\ees
and therefore
\begin{eqnarray*}
\partial_t \log j^{xy}(t,\zz) &=& \partial_t \log j(t,\zz) + \partial_t \phi_t(z') - \partial_t \phi_t(z) \\
&=& \partial_t \log j(t,\zz) + \jj(t,z') - \jj^{xy}(t,z') -\jj(t,z)+\jj^{xy}(t,z),
\end{eqnarray*}
from which (ii) follows. Item (iii) is a direct consequence of the gradient appearing in \eqref{eq-43}. Indeed, for any closed walk $\cc =(x_0 \to .. \to x_{n} = x_0)$ and any $t \in (0,1) $ we have
\begin{eqnarray*}
\prod_{i=0}^{n-1} j^{xy}(t, x_{i} \to x_{i+1} ) &=& \prod_{i=0}^{n-1}j(t,\zz) \exp(\sum_{i=0}^{n-1} \phi_t(x_{i+1}) - \phi_t(x_{i})  ) \\
&=& \cchi_c[j](t,\cc) \exp(\phi_t(x_n) - \phi_t(x_0) )=
%\underbrace{ = }_{x_n = x_0} 
\cchi_c[j](t,\cc),
\end{eqnarray*}
where the last identity follows from $x_n=x_0.$
\Boulette{$(\Leftarrow)$}
Fix $(x,y) \in \supp P_{01}$ and consider the bridge $k^{xy}$.
%%%%%%%%%%%%%%%%%%%%%%%%%%%%%%%%%%%%%%%
%%%%%%%%%%%%%%%%%%%%%%%%%%%%%%%%%%%%%%%
Thanks to item (i) we know that $P^{xy} \ll R^{xy}\ll R^{x}$. Therefore the Girsanov formula \eqref{eq-12} applies and in restriction   to $[0,t]$, we have
\begin{align*}
\frac{dP^{xy}_{[0,t]}}{dR^x_{[0,t]}}=\1 _{\{\tau = + \infty \}}
	\exp \bigg(\sum_{0<r<t: X_{r} \neq X_{r_-}}  \log \frac{k^{xy}}{j} (r, X_{r^-} \to X_r) -\int _{0}^{t} (\kk^{xy}-\jj)(r,X _{r^-})\, dr  \bigg)
\end{align*}
where $\tau$ is defined as
\begin{alignat*}{2}
\tau:=\inf \Big\{r \in [0,t]; k^{xy}(r, X_{r^-} \to X_r)=0 
%\\&
\textrm{ or } \int _{ [0,r]} \kk^{xy}(s, X_{s^-})\, ds= \infty
\Big\} \in [0,t] \cup \left\{\infty\right\}.
\end{alignat*}
The remainder of the proof is divided into four steps.
\begin{enumerate}
\item[\textit{Step 1.\ }]
We claim that 
\bes
 \{\tau = + \infty \}, \quad R^x \as
\ees
Indeed, by assumption, for any $r \in [0,1)$ and any arc $\zz$,  $k^{xy}(r,z \to z')<+\infty$. Since $(X,\to)$ is of bounded degree, we then have that $\kk^{xy}(r,z)$ is also finite. Using the fact that $k^{xy}(\cdot,\zz)$ is continuous  on  $[0,t]$ and that $R^x \as$  we observe finitely many jumps, we deduce that the integral $\int_{0}^t \kk^{xy}(r,X_{r^-})dr $ is finite $R^x \as$.
Moreover, since by assumption, $k^{xy}(r,\zz)>0$ for all $r \in (0,1) $ and $\zz$, the only possibility to have $k^{xy}(r,X_{r^-} \to X_r) =0$ is to have a jump at $r=0$. But this does not happen $R^x \as$. 
%%%%%%%%%%%%%%%%%%%%%%%%%%%%%%%%%%%%%%%
\item[\textit{Step 2.\ }]
In this step we show the equality of all closed walk characteristics and all arc characteristics. To prove that $ \cchi_c[k^{xy}] (t,\cc)= \cchi_c [j](t,\cc)$ on a $\mathcal{T}$-basis of the closed walks implies $\cchi_c[k^{xy}] (t,\cc)= \cchi_c [j](t,\cc)$ for \textit{any} closed walk, we first define  the function
\bes 
\ell(z \to z') = \log j(t,z \to z') - \log k^{xy}(t,z \to z').
\ees
 Item (iii) of Theorem \ref{res-08} can then be rewritten as 
\bes
\forall \cc \in \mathcal{C}, \quad \ell(\cc) = 0
\ees
where
\bes  
\ell(\cc) := \sum_{i=0}^{n-1}  \ell( x_i \to x_{i+1} ) = \log ( \cchi[j](t,\cc)-\cchi[k^{xy}](t,\cc).
\ees 
The conclusion follows with an application of Lemma \ref{res-07}, whose proof is detailed at Appendix \ref{sec-B}. Indeed, the implication $(a) \Rightarrow (b)$ shows that $\ell(\cc) =0$ for any closed walk. 
But $\ell(\cc)$ is exactly $\log \cchi[j](t,\cc) -\cchi[k^{xy}](t,\cc)$ (see Definition \ref{def-02}). 

 Let us turn to the arc characteristics. Consider any arc $z \to z'$. Then, since $\mathcal{C}$ is a $\mathcal{T}$-basis of closed walks, either $\mathbf{f}_{z' \to z} \in \mathcal{C}$ or $\mathbf{f}_{z \to z'} \in \mathcal{C}$, recall Definition \ref{defs-01}. We assume w.l.o.g.\ that $\mathbf{f}_{z' \to z} \in \mathcal{C}$, the other case following with minor modifications.
If $(z =z_0 \to ..\to z_n=z')$ is the only simple $\mathcal{T}$-walk from $z$ to $z'$, we have
\begin{eqnarray}\label{eq-46}
 &{}&\cchi_a[k^{xy}] (t,z \to z') \\
 &=& \nonumber \partial_t \log k^{xy}(t,z\to z') + \kk^{xy}(t,z')-\kk^{xy}(t,z)\\
\nonumber &=& \partial_t \log k^{xy}(t,z\to z') + \sum_{i=0}^{n-1}\kk^{xy}(t,x_{i+1})-\kk^{xy}(t,x_{i})\\
\nonumber &=&\partial_t \log \cchi_{c}[k^{xy}](t,z\to z'\to z) - \partial_t \log \cchi_{c}[k^{xy}](t,\mathbf{f}_{z' \to z}) 
+ \sum_{i=0}^{n-1}\cchi_{a}[k^{xy}](t,z_{i}\to z_{i+1}).
\end{eqnarray} 
Since $\mathbf{f}_{z' \to z},(z \to z' \to z) \in \mathcal{C}$ and $z_i \to z_{i+1} \in \mathcal{T}$ for all $0 \leq i \leq n-1$, we can use (ii),(iii) to conclude that all the terms appearing in \eqref{eq-46} coincide with those obtained by replacing $k^{xy}$ with $j$. Repeating backward the computations that led to \eqref{eq-46} we obtain that $\cchi_a[k^{xy}] (t,z \to z')=\cchi_a[j] (t,z \to z')$, which is the desired result.
%%%%%%%%%%%%%%%%%%%%%%%%%%%%%%%%%%%%%%%
\item[\textit{Step 3.\ }] In this step we show that the density $\frac{dP^{xy}_{[0,t]}}{dR^x_{[0,t]}}$ is $X_t$- measurable.
The equality of the closed walk characteristics implies that for all $0<r<t$ there exist a potential $\phi_r: \XX \rightarrow \mathbb{R}$ such that
\begin{equation}\label{e-48}
 \log(k^{xy}(r,\zz) ) = \log(j(r,\zz) ) + \phi_r(z')-\phi_r(z), \quad  \forall \zz.
\end{equation}
The proof of this is standard, see Lemma \ref{res-07}. Because of the regularity of $k^{xy}$ and $j$, we can always choose $\phi_r$ to be continuously differentiable in the time variable.
Plugging the expression for $k^{xy}$ we derived at \eqref{e-48} in (ii) we obtain that $\phi_r$ satisfies the HJB equation \eqref{eq-19} on the restricted domain $[0,t] \times \XX$.
But then,
\begin{eqnarray}\label{e-51}
\nonumber && -\int _{0}^{t} (\kk^{xy}-\jj)(r,X _{r^-})\, dr\\\nonumber &=& \int _{0}^{t}\sum_{z':X_{r^-} \to z'} j(r,X _{r^-}\to z')(1- \exp(\phi_r(z') -\phi_r(X_r^-) ) )\, dr\\
&=&\int _{0}^{t} \partial_r \phi_r(X_{r^-})dr .
\end{eqnarray}
Using \eqref{e-48} and \eqref{e-51} and \emph{Step 1}, Girsanov's formula rewrites as
\bes
\frac{dP^{xy}_{[0,t]}}{dR^x_{[0,t]}} = \exp \left( \int^{t}_0 \partial_r \phi_r(X_{r^-}) dr + \sum_{0<r<t, X_{r^-} \neq X_{r} }   \phi_r(X_r)-\phi_r(X_{r^-})  \right).
\ees
Because of the regularity  in time of $\phi$, we can apply the It\'{o} formula to conclude that
\bes
\frac{dP^{xy}_{[0,t]}}{dR^x_{[0,t]}} = \exp \left( \phi_t(X_t) - \phi_0(x) \right).
\ees
\item[\textit{Step 4.\ }]
In this step we show that the density $\frac{dP^{xy}}{dR^x}$ is $ X_1$-measurable. This is a consequence of the fact that $\frac{dP^{xy}_{[0,t]}}{dR^x_{[0,t]}}$ converges in $L^1$ to $\frac{dP^{xy}}{dR^x}$. Because of \emph{Step 3} we have that for any $r<1$, if $t>r$, $\frac{dP^{xy}_{[0,t]}}{dR^x_{[0,t]}}$ is $\sigma(X_{[r,1]})$ measurable, and therefore so is $\frac{dP^{xy}}{dR^x}$. But then it is measurable with respect to $\bigcap_{r <1}   \sigma(X_{[r,1]}) = \sigma(X_1)$. Since $P^{xy}(X_1=y)=1$, then $P^{xy}$ is exactly $R^{xy}$. The proof is complete.
\hfill$\square$
\end{enumerate}
%%%%%%%%%%%%
%%%%%%%%%%%%%%%%%%%%%%%%%%%%%
%%%%%%%%%%%%%%%%%%%%%%%%%%%%
\subsection*{Proof of Theorem \ref{res-06}}\label{sec-pf-res-06}

%In order to simplify notation, for a given $x\in\supp P^x_1$, we write $\Za=\AaR(x,\supp P^x_1)$ and $\Zs=\AsR(x,\supp P^x_1)$ during this proof.
\Boulette{(a)}  
 Because of \eqref{eq-03}  and $P \in \Rec(j)$, we have: 
 \begin{equation}\label{eq-04}
 \begin{split}
 P(T^t_1\leq t = h/2  &\mid X_t=z,X _{ t+h}=z',T^t_2>t+ h) \\
& =R(T^t_1\in t+hI \mid X_t=z,X _{ t+h}=z',T^t_2>t+ h).
 \end{split}
 \end{equation}
 Therefore it suffices to do the proof with $R$ instead of $P$.

Recall that for a Poisson process with intensity $\lambda(t)$ the density of the law of the first instant  of jump is $t \mapsto \lambda(t) \exp(-\int_0^t \lambda(s) \, ds),$ $t\geq 0$. Therefore for all $\tau \in [0,1]$,
\begin{align*}
R(&T^{t}_1 \leq \ t+h\tau, X_{t+h}=z', T^t_2>t+h \mid X_t=z) \\
	=\ &\int _{0}^{\tau h} \jj(t+r,z)\exp \Big(-\int_0 ^{ r}\jj(t+s,z)\,ds\Big) \frac{j(t+r,\zw )}{\jj(t+r,z)}\exp\Big(-\int _{ r}^{ h}\jj(t+s,z')\,ds\Big)\,dr\\
	=\ &h \int_{0}^{\tau} \exp \Big(-\int_{0}^{ hr}\jj(t+s,z)\,ds\Big) j(t+hr,\zw )
		\exp\Big(-\int _{ hr}^{ h}\jj(t+s,z')\,ds\Big)\,dr.
\end{align*}
Using the following expansions as $h$ tends to zero:
\begin{eqnarray*}
\exp \Big(-\int_{0} ^{ hr}\jj(t+s,z)\,ds\Big)&=& 1-\jj(t,z)h r+o(h),\\
\exp\Big(-\int _{ hr}^{ h}\jj(t+s,z')\,ds\Big)&=&1- \jj(t,z')(1-r)h+o(h),\\
j(t+hr,\zw )&=&j(t,\zw )+ \partial_t j(t,\zw  )hr+o(h),
\end{eqnarray*}
we obtain
\begin{align*}
R(T^t_1\in& t+h \tau, X _{ t+h}=z', T^t_2>t+ h \mid X_t=z) \\
	=\ & hj(t,\zw )\int_{0}^{\tau} \left( 1+ h\left[ \frac{ \partial_t j(t,\zw )}{j(t,\zw )}r-\jj(t,z)r-\jj(t,z')(1-r)\right] \right) \,dr+o(h^2)\\
	=\ &hj(t,\zw ) \Big(\tau+h \big\{\cchi_a[j](t,\zw )\tau^2/2-\jj(t,z')\tau \big\}\Big)\,dr+o(h^2).
\end{align*}
In particular, with $\tau=1$ this implies that 
\begin{align*}
R(X _{ t+h}=z', T^t_2>t+ h &\mid X_t=z) \\
	&=hj(t,\zw )\big(1+ h\big\{\cchi_a[j](t,\zw )/2-\jj(t,z')\big\}\big)+o(h^2).
\end{align*}
When $\tau = 1/2$ we have
\begin{align*}
R(T^t_1 \leq t+h/2 , X _{ t+h}=z' ,T^t_2>t+ h &\mid X_t=z) \\
	&=hj(t,\zw )\big(1/2+ h\big\{\cchi_a[j](t,\zw )/8-\jj(t,z')/2\big\}\big)+o(h^2).
\end{align*}
Taking the ratio of these probabilities leads us to
\begin{align*}
R(T^t_1\leq t+h/2 \mid X_t=z,&X _{t+h}=z',T^t_2>t+ h) \\
= &\frac{1/2+ h\big\{\cchi_a[j](t,\zw )/8-\jj(t,z')/2\big\}+o(h)}{1+ h\big\{\cchi_a[j](t,\zw )/2-\jj(t,z')\big\}+o(h)}\\
	&=  1/2-h \cchi_a[j](t,\zw )/8+o(h).
\end{align*}
With \eqref{eq-04} this gives  \eqref{eq-02}.

\Boulette{(b)}
Using  again \eqref{eq-03} we just need to prove the statement under $R$ rather than $P$.
Since $R(X_t = X_{t+h} =z ) = R(X_t =z)(1 + o(1))$ as $h \to 0^+$, we can write the proof with $R(\cdot \mid X_t =z)$ instead of $R( \cdot \mid X_t = X_{t+h}= z)$. Therefore, if $\cc =(z=x_0 \to x_1\cdots\to x_{|\cc|}=z )$,
\begin{equation*}
\begin{split}
R\Big((&X_t\to X _{ T^t_1}\to\cdots\to X _{ T^t _{ |\cc|}})=\cc,T^t _{ |\cc|}<t+h< T^t _{ |\cc|+1}\mid  X_t=z\Big)\\
	&= \int _{ \{t<t_1<\cdots<t _{ |\cc|}<t+h\}} \prod _{ i=1} ^{ |\cc|}\exp \left[-\int _{ t _{ i-1}}^{ t_i}\jj(s,x _{i-1})\,ds\right] j(t_i,x _{ i} \to x_{i+1}) \\
	&\hskip 8cm \times\  \exp \left[-\int _{ t _{ |\cc|} }^{ t+h}\jj(s,z)\,ds\right] \ dt_1\cdots dt _{ |\cc|} \\
	&= \cchi_c[j](t,\cc)(1 + o(1) ) \
	 \int _{ \{t<t_1<\cdots<t _{ |\cc|}<t+h\}} \exp \Big[- \sum_{i=1}^{\cc} \int _{t_{i}}^{ t_{i+1}}\jj(s,x_{i-1})\,ds \\
	 &\hskip 10cm -\int _{ t _{ |\cc|} }^{ t+h}\jj(s,z)\,ds\Big] \,dt_1\cdots dt _{ |\cc|} \\
	&= \cchi_c[j](t,\cc)h ^{ |\cc|}/|\cc|! +o(h ^{ |\cc|})
\end{split}
\end{equation*}
where we used the convention that $t_0:=t$. This completes the proof of the theorem. 
\hfil$\square$
\section{Examples}\label{sec-examples}
In this series of examples, we illustrate  Theorem \ref{res-08} improved by Proposition \ref{res-14}. We compute the reciprocal characteristic $\cchi[j]$ and sometimes we consider the characteristic equation \eqref{eq-09}.

\subsection*{Birth and death process}

The vertex set is $ \XX=\mathbb{N}$ with the usual graph structure which turns it into an undirected tree. The reference walk $R$ is governed by the time-homogeneous Markov intensity $j(z\to z+1)= \lambda>0,$ $z\ge 0$ and $j(z\to z-1)= \mu>0,$ $z\ge 1.$ Clearly, the set of all edges $ \mathcal{E}= \left\{(z\leftrightarrow  z+1), z\in \mathbb{N}\right\} $ generates $\mathcal{C}$ and  the characteristics of the reference intensity are
$$ \left\{ \begin{array}{l}
\cchi_a [j](z\to z+1)=\cchi_a[j](z+1\to z)=0,\ z\ge 1,\\
\cchi_a [j](0\to 1)=-\cchi_a(1\to 0)= \mu,\\
\cchi_c[j](z \leftrightarrow z+1 )= \lambda \mu, \ z\ge 0.
\end{array}\right.
$$

\subsubsection*{Time-homogeneous Markov walks  in $\Rec[j]$.}
Let us search for such a  random walk $P$.
We denote $\tilde{\lambda}(z)$ the intensity of $(z\to z+1)$ and $\tilde{\mu}(z+1)$ the intensity $(z+1\to z)$ of the Markov walk $P$.
By Theorem \ref{res-08}, $P \in\Rec[j]$ if and only if
\begin{equation*}
\left\{
\begin{array}{l}
\tilde{\lambda}(z+1)+ \tilde{\mu}(z+1)- \tilde{\lambda}(z)- \tilde{\mu}(z)=0,\ z\ge 1,\\
\tilde{\lambda}(1)+ \tilde{\mu}(1)- \tilde{\lambda}(0)= \mu,\\
\tilde{\lambda}(z) \tilde{\mu}(z+1)= \lambda \mu,\ z\ge 0.
\end{array}\right.
\end{equation*}
The solutions to the the above set of equations can be parametrized by choosing $\tilde{\lambda}(0)$ arbitrarily and finding $\tilde{\lambda}(z+1)$, $\tilde{\mu}(z+1)$ recursively as follows 
$$
\left\{ \begin{array}{l}
 \tilde{\mu}(z+1)= \tilde{\lambda}(z)^{-1} \lambda \mu,\ z\ge 0,\\
\tilde{\lambda}(z+1) = \mu + \tilde\lambda(0)-\tilde{\mu}(z+1),\ z\ge 1. \\
\end{array}\right.
$$
With some simple computations one can see that for any   large enough $\tilde{\lambda}(0)$,  the above system admits a unique positive and bounded solution. Hence,  the corresponding Markov walk has its sample paths in $\OO$ and it is in $\Rec[j]$.

\subsection*{Hypercube}
Let $\XX =\{0,1\}^d$ be the $d$-dimensional hypercube with its usual directed graph structure   and let $\{g_i \}_{i=1}^d$ be the canonical basis. For $x \in \XX$, we set $x^i:=x+g_i$ and $x ^{ ik}=x+g_i+g_k$ where we consider the addition modulo $2$. 
Let
$$ \mathcal{S}:= \big\{ (x \to x^i \to x^{ik} \to x^k \to x),  \ x \in \XX, \ 1 \leq i,k \leq d \big\} 
$$
be the set of all directed squares.
One can see that for any given tree $\mathcal{T}$, a $\mathcal{T}$- basis of closed walks can be constructed by selecting only closed walks from the set
\begin{equation}\label{eq-22}
 \mathcal{S} \cup \mathcal{E}.
\end{equation}
This gives a practical canonical way to check the conditions of Theorem \ref{res-08}.

\subsubsection*{The bridge of a simple random walk on the discrete hypercube}

Let $j$ be the simple random walk on
the hypercube. The intensity $j^{xy}(t,\zw )$ of the $xy$-bridge can  be computed  since the transition density of the random walk is known explicitly. 
We have
\begin{equation}\label{eq-36}
j^{xy}(t,z \to z^i) = \begin{cases}
\cosh(1-t)/\sinh(1-t), & \quad \mbox{if $z_i \neq y_i$,} \\
\sinh(1-t)/\cosh(1-t), & \quad \mbox{if $z_i = y_i$,}
 \end{cases} 
\end{equation} 
where $z_i$ and $y_i\in \left\{0,1\right\} $ are the $i$-th coordinates of $z$ and $y\in\XX.$
\\
We provide an alternate proof based on the characteristic equation \eqref{eq-09}.
First, it is immediate to see that under any bridge, all arcs of the hypercube are active at any time. From $\chi_c[j^{xy}] = \chi_c [j]$, we deduce that  the arc function $\log (j/j^{xy})(t,\cdot)$ is the gradient of some potential $\psi_t$, see Lemma \ref{res-07}. The  equality of the arc characteristics  implies that for all $ t\in (0,1) $ and $z\in \mathcal{X}$
\begin{equation*} 
\partial_t \psi(t,z) + \sum_{i=1}^d  [\exp(\psi_t(z^i) - \psi_t(z))-1 ] = \\ \partial_t \psi(t,x) + \sum_{i=1}^d  [\exp(\psi_t(x^i) - \psi_t(x))-1 ] .
\end{equation*}
Since $\psi$ is defined up the addition of a function of time, we can assume without loss of generality that for all $0<t<1,$
$	%\begin{equation*}
\partial_t \psi_t(x) + \sum_{i=1}^d[\exp(\psi_t(x^i) - \psi_t(x))-1 ]=0.
$	%\end{equation*}
Hence $\psi$ solves the HJB equation
\begin{equation}\label{eq-37}
\partial_t \psi(t,z) + \sum_{i=1}^d  [\exp(\psi_t(z^i) - \psi_t(z))-1 ] =0, \quad  t\in [0,1), z\in \mathcal{X}.
\end{equation}
The boundary data for $\psi$ are
\begin{equation}\label{eq-38}
\lim_{t \rightarrow 1} \psi_t(z) = 
 \begin{cases}  - \infty , & \mbox{if $z \neq y,$} \\ 0 ,  & \mbox{if $z =y.$} \end{cases} 
\end{equation}
One can check with a direct computation that the solution of \eqref{eq-37} \& \eqref{eq-38}  is
\begin{equation}\label{e5}
 \psi(t,z)= \sum_{i=1}^d \log[1 + (-1)^{(z_i - y_i )} e ^{ 2(1-t)} ]
\end{equation}
where the subtraction is considered modulo two. By the definition of $\psi$, we have
$$j^{xy}(t,z\to z ^i)= j(t,z \to z ^i)\exp(\psi_t(z ^i) - \psi_t(z )) = \exp(\psi_t(z ^i) - \psi_t(z ))$$ and \eqref{eq-36} follows with a simple computation.

\subsection*{A triangle}

We consider the triangle $(a,b,c)$ with arcs between any pair of vertices. We define the reference intensity $j$ to be $1$ on every arc.
Our aim is to compute the intensity $j^{a,b}$ of the  \textit{ab} bridge  through the characteristic equation. 
As a tree, we pick $\mathcal{T}= (a\to b\to c)$. The corresponding equations are
\bes
\begin{cases}
\partial_t \log j^{ab}(t, a \to b) + \jj^{ab}(t,b)-\jj^{ab}(t,a) = 0,\\
\partial_t \log j^{ab}(t, b \to c) + \jj^{ab}(t,c)-\jj^{ab}(t,b) = 0.
\end{cases}
\ees
For the closed walk component, we can choose $\mathcal{C}=\{ (a \to b \to c), (a \to b), (b \to c), (c \to a) \}$. Thus we have
\bes
\begin{cases}
j^{ab}(t,a\to b)j^{ab}(t,b \to c)j^{ab}( c \to a) = 1, \\
j^{ab}(t, a \to b) j^{ab}(t, b \to a ) =1,\\
j^{ab}(t, b \to c) j^{ab}(t, c \to b ) =1,\\
j^{ab}(t, c \to a) j^{ab}(t, a \to c ) =1.
\end{cases}
\ees
Concerning the boundary conditions, we have:
\begin{equation*}
 \lim_{t \rightarrow 1} \int_{0}^t \jj^{ab}(s, *)ds = 
\begin{cases} - \log ( R^{a}_1(b)) + 1, \quad & \mbox{if $*=b$ }, \\
-\infty, \quad & \mbox{otherwise}.
 \end{cases}
\end{equation*}
This system of coupled ODEs can be solved explicitly, using the fact that the transition density can be explicitly computed. One can check directly that if we define the functions
\begin{eqnarray*}
\xi_{0}(t)&:=& 1/3 +2/(3 e^{3/2}) \cos \bigg( \frac{\sqrt{3}}{2} \bigg),\\
\xi_{1}(t)&:=& 1/3 +2/(3 e^{3/2}) \sin \bigg( \frac{\sqrt{3}}{2} - \pi /6 \bigg),\\
\xi_{2}(t)&:=& 1/3 -2/(3 e^{3/2}) \sin \bigg( \frac{\sqrt{3}}{2}+ \pi /6 \bigg),
\end{eqnarray*}
we obtain
\begin{eqnarray*}
\begin{cases}
j^{a,b}(t,a\to b) =j^{a,b}(t,c \to b) =(\xi^2_0 + \xi^2_1+ \xi^2_2)/(\xi_1 \xi_2+\xi_1 \xi_0 + \xi_2 \xi_0 )(t), \\
j^{a,b}(t, b \to a ) = j^{a,b}(t, b \to c ) = 1/j^{a,b}(t,a\to b),\\
 j^{a,b}(t,a\to c) =j^{a,b}(t,c \to a) =1.
 \end{cases} 
\end{eqnarray*}

\subsection*{Complete graph}

The directed graph structure of the complete graph on a finite set $\XX = \{1,...,|\XX|\}$ consists of all the couples of distinct vertices, the set of  arcs is $\Aa=\XXX\setminus \left\{(x,x);x\in\XX\right\} .$
Pick an arbitrary vertex $*\in\XX$ and consider the set
 \begin{equation*}
\Delta:= \left\{(*\to z\to z'\to*); z,z',* \textrm{ distinct}\right\} 
 \end{equation*}
 of all directed triangles containing $*$. Then, as indicates Figure \ref{fig-03}, it can be shown that for any tree $\mathcal{T}$ we can construct an associated basis of closed walks by drawing from the set
$\Delta$.

\begin{figure}[h]
\includegraphics[width=10cm,height=3cm]{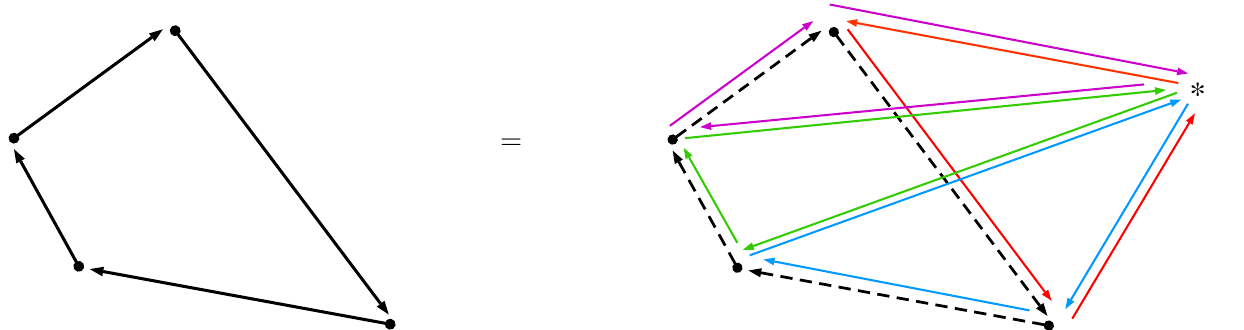}
\caption{Decomposition of a simple walk into  2-walks and 3-walks}\label{fig-03}
\end{figure}

\subsubsection*{Some sampler}
Let us analyze in a bit more detail  one example of a walk on the complete graph.
Take  $ m\in\PX$
a positive probability  distribution  on the finite set $\XX$.  The detailed balance conditions: $ m(z)j(\zz)= m(z')j(z'\to z), \forall z,z',$ tell us that  the intensity
 $$ j(\zw ) = \sqrt{ \frac{ m(z')}{ m(z)}} \quad  z,z' \in \XX $$ admits  $m$ as its reversing measure.
The characteristics associated with $j$  are
\begin{align*}
\chi_a[j](t,(\zw  )) &= 
\Big[\sum _{ x\in\XX}m(x) ^{ 1/2}\Big] (m(z') ^{ -1/2}- m(z) ^{ -1/2})
\\
\chi_{\cc}[j](t,\cc) &= 1
\end{align*}
for any $0\le t\le 1,$ any arc $(\zz)$ and any closed walk $\cc.$

\subsection*{Cayley graphs}

Let $(\XX,*)$ be a group and $ \mathcal{G}= \left\{ g_i; i\in I\right\} $
 be a finite subset of generators. The directed graph structure associated with $ \mathcal{G}$ is defined for any $z,z'\in\XX$ by $\zz$ if  $z'=z g$ for some $g\in \mathcal{G}.$  We introduce the time independent reference intensity $j$ given by 
 \begin{equation*}
 j(z\to zg_i):=j_i,\quad \forall z\in\XX, g_i\in \mathcal{G},
 \end{equation*}
 where $j_i>0$ only depends of the direction $g_i.$ The dynamics of the random walk $R$ is Markov and both time-homogeneous and invariant with respect to the left translations, i.e.\ for all $z_o,z,z'\in\XX,$ $j(z_oz\to z_oz')=j(\zz).$  
For all arc $(\zz)$, we have
\begin{equation*}
\cchi_a(\zz)=0
\end{equation*}
and the closed walk characteristic $\cchi_c$ is  translation invariant. 

\begin{proposition}
Let $j$ and $k$ be two positive  Markov intensities on this Cayley graph which are time-homogeneous and invariant with respect to the left translations. Then, they share the same bridges  if and only if  for any $n\ge 1$ and $(i_1,\dots, i_n)\in I^n$ with $g _{ i_1}\cdots g _{ i_n}=e$, we have $j _{ i_1}\cdots j _{ i_n}=k _{ i_1}\cdots k_{ i_n}.$ 
\end{proposition}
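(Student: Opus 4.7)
The plan is to derive the proposition as a direct application of Corollary \ref{res-14}, using the translation invariance of both $j$ and $k$ to reduce the problem to a single condition on closed walks. Since $j$ and $k$ are both Markov intensities, the statement ``$j$ and $k$ share the same bridges'' is equivalent to ``the Markov walk $P_k$ of intensity $k$ belongs to $\Rec(j)$'' (and, by symmetry, $P_j \in \Rec(k)$), where $P_k, P_j$ denote the Markov walks with intensities $k$ and $j$ respectively.

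First I would note that the arc component of the reciprocal characteristic is trivial for both intensities: the excerpt establishes $\cchi_a[j](t,\zz) = 0$ for every arc, and the same calculation with $j$ replaced by $k$ gives $\cchi_a[k] \equiv 0$, because left-translation invariance forces $\kk(z)$ to be independent of $z$ and time-homogeneity kills the $\partial_t \log$ term. Hence condition (ii) of Corollary \ref{res-14} is automatic, irrespective of the choice of spanning tree $\mathcal{T}$, and only the closed walk component is actually at stake.

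Next I would translate the product condition into a statement about $\cchi_c$. Any closed walk $\cc$ based at a vertex $z_0$ of the Cayley graph has the form $z_0 \to z_0 g_{i_1} \to z_0 g_{i_1} g_{i_2} \to \cdots \to z_0 g_{i_1} \cdots g_{i_n}$ with $g_{i_1} \cdots g_{i_n} = e$, and the left-translation invariance of $j$ yields $\cchi_c[j](\cc) = j_{i_1} \cdots j_{i_n}$ independently of the base vertex $z_0$ (and likewise for $k$). Conversely, every admissible sequence $(i_1,\dots,i_n)$ with $g_{i_1}\cdots g_{i_n}=e$ arises this way. The condition stated in the proposition is therefore \emph{exactly} the identity $\cchi_c[j](\cc) = \cchi_c[k](\cc)$ for every closed walk $\cc$ of the Cayley graph.

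With these two reductions, the $(\Leftarrow)$ direction is immediate: the product equality implies $\cchi_c[j] = \cchi_c[k]$ on any chosen $\mathcal{T}$-basis of closed walks, so Corollary \ref{res-14} yields $P_k \in \Rec(j)$ and the bridges coincide. For $(\Rightarrow)$ one applies Corollary \ref{res-14} in the opposite direction: sharing bridges forces $\cchi_c[j] = \cchi_c[k]$ on some $\mathcal{T}$-basis $\mathcal{C}$, and the only mild obstacle is to upgrade this to equality on \emph{every} closed walk. This is handled exactly as in Step 2 of the proof of Theorem \ref{res-08}: setting $\ell(\zz) := \log j(\zz) - \log k(\zz)$ one has $\ell(\cc) = 0$ for all $\cc \in \mathcal{C}$, and Lemma \ref{res-07} of Appendix \ref{sec-B} propagates this to $\ell(\cc) = 0$ for every closed walk, which finishes the proof.
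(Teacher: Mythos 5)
Your proposal is correct and follows essentially the same route as the paper: both reduce the statement to the reciprocal characteristics, observe that time-homogeneity and left-translation invariance make $\cchi_a[j]=\cchi_a[k]=0$, identify the product condition over relations $g_{i_1}\cdots g_{i_n}=e$ with the equality of closed-walk characteristics, and conclude via Theorem \ref{res-08}/Corollary \ref{res-14} together with translation invariance. Your explicit use of Lemma \ref{res-07} to pass from a $\mathcal{T}$-basis to all closed walks in the $(\Rightarrow)$ direction is just a spelled-out version of what the paper leaves implicit.
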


As usual, we have denoted $e$ the neutral element.

\begin{proof}
We have already seen that $\cchi_a[j]=\cchi_a[k]=0.$ On the other hand,
the relation $g _{ i_1}\cdots g _{ i_n}=e$ means that $\cc:=(e\to g _{ i_1}\to g _{ i_1}g _{ i_2}\to \cdots \to g _{ i_1} g _{ i_2}\cdots g _{ i _{ n-1}}\to e)$ is a closed walk and the identity $j _{ i_1}\cdots j _{ i_n}=k _{ i_1}\cdots k_{ i_n}$ means that $\cchi_c[j](\cc)=\cchi_c[k](\cc).$ We conclude with Theorem \ref{res-08}, Proposition \ref{res-14} and the invariance with respect to left translations.
\end{proof}

\begin{remark}
If the group $\XX$ is  Abelian, Proposition \ref{res-15} can be further sharpened by considering only finitely many sequences $(i_1,...,i_n)$. This is done in  \cite[Cor.\,16]{CR14}. The equation (17) in \cite{CR14} corresponds to the equality of the closed walk characteristics. No mention of the arc characteristic is done, since in the case when the jump intensities are translation invariant, they are always zero.
\end{remark}

\subsubsection*{Triangular lattice}

The triangular lattice is the Cayley graph generated by $g_i = ( \cos(\frac{2\pi}{ 3}(i-1), \sin(\frac{2\pi}{ 3}(i-1)),$  $i=1,2,3,$ and we consider a time-homogeneous and translation invariant Markov intensity $j$.   
\begin{figure}[h]
\includegraphics[width=12cm,height=6cm]{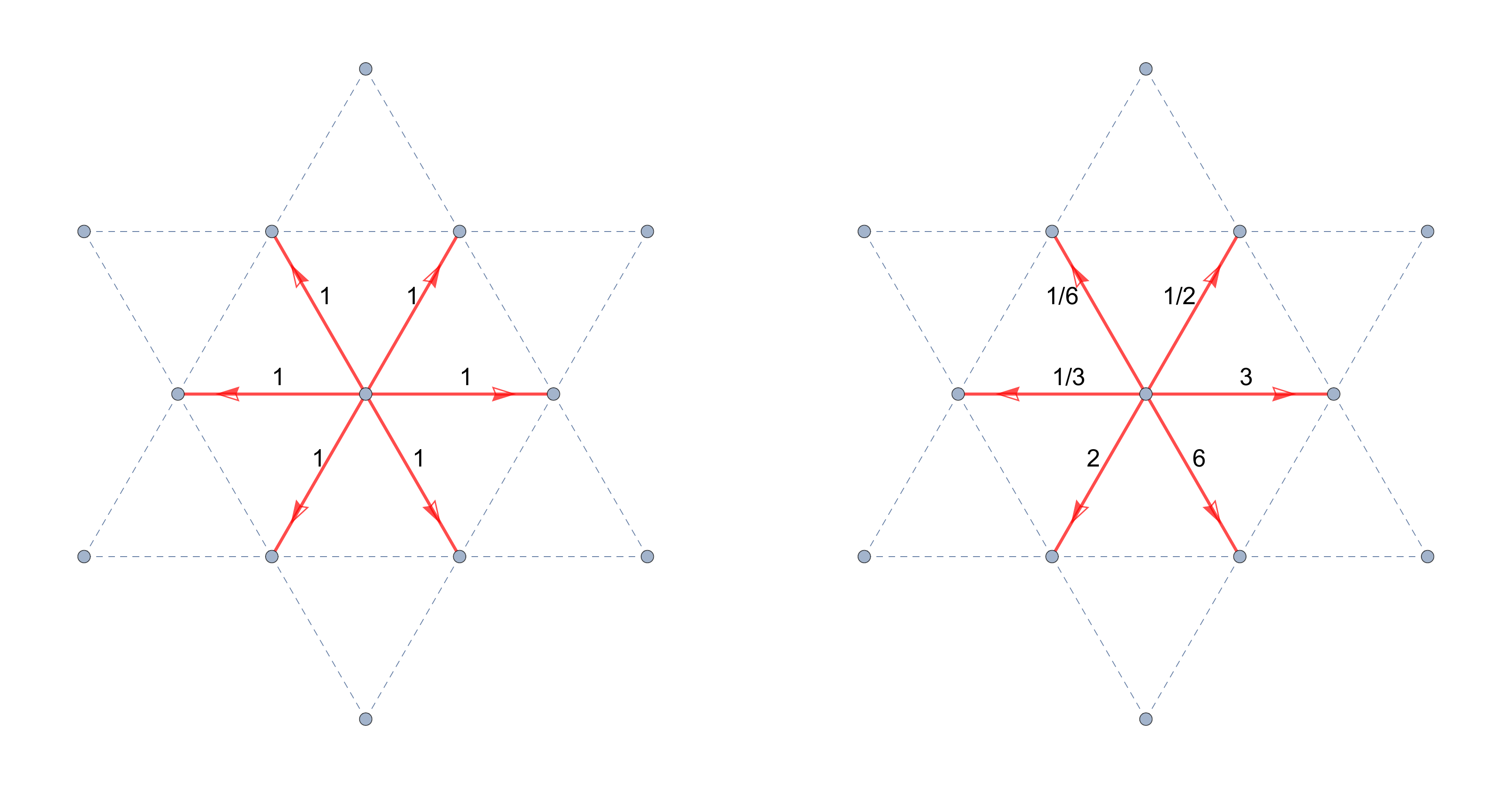}
\caption{Two different space-time homogeneous random walks on the triangular lattice which belong to the same reciprocal class}
\end{figure}

\begin{figure}[h]
\includegraphics[width=12cm,height=6cm]{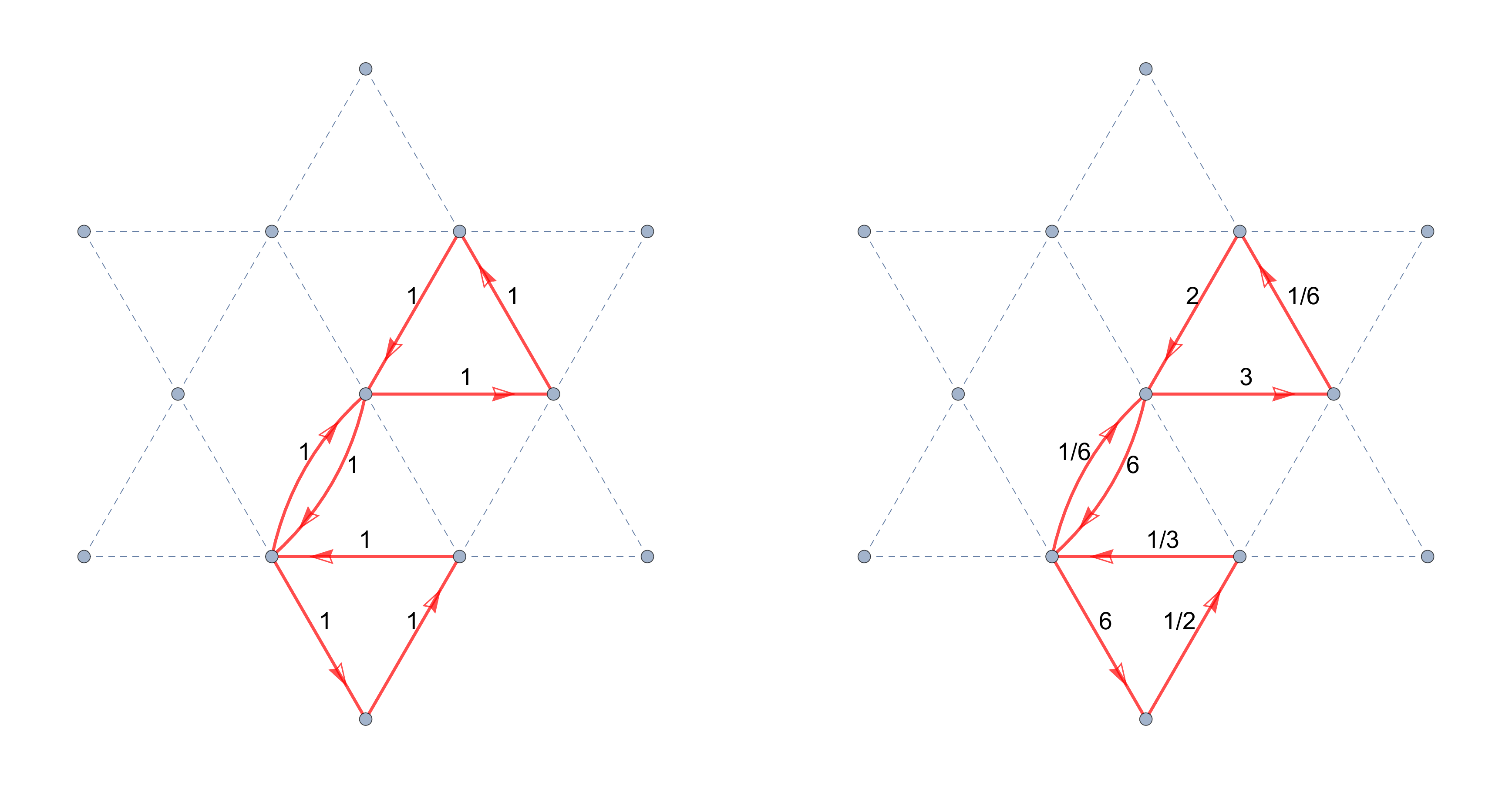}
\caption{The closed walk characteristics coincide}
\end{figure}
For any closed walk $(z\leftrightarrow z+g_i)$  associated with an edge, we have 
  $$
  \chi_{\cc}[j](t,z\leftrightarrow z+g_i) = j_i  j_{-i}.
  $$ 
If we take any counterclockwise oriented face, i.e.\ a closed walk of the form $ \Delta	_z:= (z \to z+ g_1 \to z + g_1+g_2 \to z)$ for $z \in X$ we have  
$$
\chi_{\cc} [j](t, \Delta_z) = j_1j_2j_3.
$$
 We address the  question of finding another space-time homogeneous assignment  $\{ k_{\pm i}\}_{1 \leq i \leq 3}$ such that the corresponding walk  belongs to $\Rec(j)$. Applying Theorem \ref{res-08} or invoking Proposition \ref{res-15}, we can parametrize the solutions $k$ as follows
   \begin{equation*}
   \left\{
   \begin{array}{lcllcl}
   k_1 &=& \alpha j_1,&k_{-1} &=& \alpha^{-1} j_{-1} \\
    k_2 &=& \beta j_2,  &k_{-2}&=& \beta^{-1} j_{-2}\\
k_3  &=&  (\alpha\beta ) ^{ -1} j_3,\quad  &k_{-3}  &=&  \alpha \beta  j_{-3}
   \end{array}
\right.
 \end{equation*}
 where $ \alpha, \beta>0.$
Corollary \ref{res-12} gives some details about the dynamics of the bridge $R^{xy}$ as the unique Markov walk (modulo technical conditions) that starts in $x$, ends in $y$ and such that, if $h>0$ is a very small duration:

\begin{enumerate}
\item At any time $t$ and independently from the current state, it goes back and forth along the direction $i$ during $[t,t+h]$ with probability $ j_i j_{-i} h^2 /2 +o(h^2).  $
\item At any time $t$ and independently from the current state, it goes around the perimeter of a triangular cell of the lattice in the counterclockwise sense during $[t,t+h]$ with probability $j_1 j_2 j_{3} h^3/6 +o(h^3). $ 
\item If exactly one jump occurs during  $[t,t+h]$, then the density of the instant of jump  is constant up to a correction factor of order $o(h)$. This follows from $\chi_a[j](t,\zz)=0$ for all $t$ and $(\zz)$.
\end{enumerate}

\begin{comment}
\subsubsection*{Rooted regular directed tree}

It is an infinite directed tree such that each vertex admits exactly $m\ge1$ offsprings. Except for the root, all the vertices have the same index $m+1.$ It is the Cayley tree rooted at $*=e$ and  generated by $ \mathcal{G}=\{g_1, \dots, g_m\}$ where these $m$ branches are free from each other: they do not satisfy any relation (of the type  $g _{ i_1}\cdots g _{ i_n}=e$). In particular, for any $1\le i\le m$, $g_i ^{ -1}$ is not in $ \mathcal{G}$ and in fact only a ring, instead of a group, would be necessary.  This freedom is equivalent to the nonexistence of  cycles  which is the defining property of a tree. 

As a direct consequence of Proposition \ref{res-15} we obtain the following 

\begin{corollary}
Two positive, time-homogeneous and translation invariant Markov intensities $j$ and $k$ on a rooted regular directed tree generate the same bridges: $\Rec(j)=\Rec(k)$. 

In particular, this implies that these bridges are insensitive to time scaling: $\Rec( \alpha j)=\Rec (j),$ for all $ \alpha>0.$
\end{corollary}
\end{comment}

\subsubsection*{The lattice $ \mathbb{Z}^d$}

The usual directed graph structure on the vertex set $\XX= \mathbb{Z}^d$  is the Cayley graph structure generated by $ \mathcal{G}= \left\{g_i, g _{ -i}; 1\le i\le d\right\} $ with  $g_i=(0,\dots,0,1,0,\dots,0)$  where $1$ is the $i$-th entry and we denote $g _{ -i}=-g_i$. As another  consequence of Proposition \ref{res-15} we obtain the following 

\begin{corollary}\label{res-16}
Two time-homogeneous and translation invariant positive Markov intensities $j$ and $k$ on $ \mathbb{Z}^d$ generate the same bridges if only if for all $1\le i\le d,$ they satisfy 
\begin{equation*}
j_i j _{ -i}=k_ik _{ -i},\quad \forall 1\le i\le d
\end{equation*} 
where $j _{ -i}$ and $ k _{ -i}$ are the intensities of jump in the direction $g _{ -i}=-g_i.$
\end{corollary}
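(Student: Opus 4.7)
The plan is to deduce this corollary as a direct specialization of the preceding Cayley-graph proposition applied to $\XX = \mathbb{Z}^d$ with the generating set $\mathcal{G} = \{g_{\pm i}: 1 \le i \le d\}$. By that proposition, $j$ and $k$ generate the same bridges if and only if the product identity $j_{i_1}\cdots j_{i_n} = k_{i_1}\cdots k_{i_n}$ holds along every sequence $(i_1,\dots,i_n) \in I^n$ for which $g_{i_1} + \cdots + g_{i_n} = 0$ in $\mathbb{Z}^d$. So my task is to enumerate these null words and then simplify the resulting infinite family of conditions.

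The key observation is that $\mathbb{Z}^d$ is the \emph{free abelian group} on $\{g_1,\dots,g_d\}$. Consequently, a word $(i_1,\dots,i_n)$ satisfies the null relation if and only if each generator $g_i$ and its inverse $g_{-i}$ appear with equal multiplicities, i.e.\ $m_i^+ = m_i^-$ for every $1 \le i \le d$, where $m_i^{\pm}$ counts occurrences of $g_{\pm i}$ in the word. Because the scalar product $j_{i_1}\cdots j_{i_n}$ depends only on the multiset of letters, along any such null word it factorizes as $\prod_{i=1}^d (j_i j_{-i})^{m_i^+}$, and analogously for $k$.

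The corollary then reduces to the purely algebraic remark that the family of identities $\prod_{i=1}^d (j_i j_{-i})^{m_i} = \prod_{i=1}^d (k_i k_{-i})^{m_i}$ indexed by $(m_1,\dots,m_d) \in \mathbb{N}^d$ is equivalent to the $d$ generating identities obtained by taking $m_i = 1$ and all other exponents zero, i.e.\ $j_i j_{-i} = k_i k_{-i}$ for $1 \le i \le d$. Necessity is immediate from the null word $(g_i,g_{-i})$, while sufficiency follows by raising these $d$ equalities to the exponents $m_i^+$ and multiplying. No genuine obstacle is expected: once the Cayley-graph proposition is invoked, the entire argument is controlled by the free abelian structure of $\mathbb{Z}^d$ and a one-line combinatorial reduction.
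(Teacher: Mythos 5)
Your proof is correct, but the reduction you use is not the one in the paper. You invoke the Cayley-graph proposition (same bridges if and only if the products $j_{i_1}\cdots j_{i_n}$ and $k_{i_1}\cdots k_{i_n}$ agree along every null word $g_{i_1}\cdots g_{i_n}=e$) and then exploit that $\mathbb{Z}^d$ is free abelian on $g_1,\dots,g_d$: every null word contains $g_i$ and $g_{-i}$ with equal multiplicities $m_i$, so each side factorizes as $\prod_{i=1}^d (j_ij_{-i})^{m_i}$, and the infinite family of constraints collapses to the $d$ edge identities $j_ij_{-i}=k_ik_{-i}$. The paper argues instead through a finite basis of closed walks: as in the hypercube example \eqref{eq-22}, edges and directed squares form a $\mathcal{T}$-basis, the arc characteristics vanish by translation invariance, the edge conditions are exactly the stated hypothesis, and the square conditions follow from the edge ones by commutativity, since the closed-walk characteristic of a square is $j_ij_kj_{-i}j_{-k}=(j_ij_{-i})(j_kj_{-k})$; Theorem \ref{res-08} (through Corollary \ref{res-14}) then concludes. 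Your route has the merit of being self-contained modulo the Cayley proposition and of avoiding the basis claim \eqref{eq-22}, which the paper justifies only for the hypercube and transfers to $\mathbb{Z}^d$ implicitly; on the other hand it leans on the absence of relations among the generators, so it does not carry over to Cayley graphs with torsion or extra relations (e.g.\ $(\mathbb{Z}/2\mathbb{Z})^d$, where $g_{-i}=g_i$), whereas the paper's finite-basis mechanism is the one that generalizes.
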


\begin{proof}
This set of equalities corresponds to the identification of the closed walk characteristic along the edges. But, because the group is Abelian, it also implies the identification along the squares, which is enough to conclude with \eqref{eq-22}.
\end{proof}

\subsubsection*{Hypercube, again}

Let us visit once more the hypercube $\XX= (\mathbb{Z}/2 \mathbb{Z})^d$ which is seen now as the Cayley graph generated by the canonical basis $g_i=(0,\dots,0,1,0,\dots,0),$ $1\le i\le d,$  where $1$ is the $i$-th entry. As another  consequence of Proposition \ref{res-15} we obtain the following 

\begin{corollary}
Two time-homogeneous and translation invariant positive Markov intensities  on the hypercube generate the same bridges if only if  they coincide.
\end{corollary}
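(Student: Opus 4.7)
My plan is to derive this corollary as a quick consequence of Proposition \ref{res-15}, exactly in the spirit of Corollary \ref{res-16} for $\mathbb{Z}^d$, but exploiting the special feature of $(\mathbb{Z}/2\mathbb{Z})^d$ that every generator is an involution.

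First, I would fix the parametrization. Translation invariance and positivity reduce $j$ to a family of positive constants $(j_i)_{1\le i\le d}$, with $j(z\to z+g_i)=j_i$ for every $z\in\XX$; similarly for $k$. Because $g_i=-g_i$ in the group, the reverse arc $z+g_i\to z$ is also of the form "shift by $g_i$", so the same constant $j_i$ encodes both directions. Thus the intensities of $j$ and $k$ are completely specified by the vectors $(j_i)$ and $(k_i)$, and coincidence of intensities is the same as $j_i=k_i$ for all $i$.

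The reverse implication (equal intensities $\Rightarrow$ same bridges) is trivial. For the forward implication, I invoke Proposition \ref{res-15}: $j$ and $k$ share bridges iff for every $n\ge 1$ and every tuple $(i_1,\ldots,i_n)$ with $g_{i_1}\cdots g_{i_n}=e$ one has $j_{i_1}\cdots j_{i_n}=k_{i_1}\cdots k_{i_n}$. Here I just need to pick, for each $1\le i\le d$, the length-two relation $g_i g_i=e$, which holds because the group has exponent two. Applying the criterion to this single family of relations gives $j_i^2=k_i^2$, and by positivity $j_i=k_i$. This concludes the argument.

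In terms of obstacles, there is really none: the length-two edge relations are already forced by the involutive nature of the generators, so there is no need to go to the squares that were used in the $\mathbb{Z}^d$ case of Corollary \ref{res-16}. The only mildly subtle point worth flagging explicitly is the observation that on the hypercube an edge carries a single intensity rather than two (one per orientation) as on $\mathbb{Z}^d$; once this is recorded, the statement reduces to a one-line application of Proposition \ref{res-15}.
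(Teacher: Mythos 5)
Your proposal is correct and follows essentially the same route as the paper: both rest on the Cayley-graph criterion (same bridges iff products of intensities agree over all relations $g_{i_1}\cdots g_{i_n}=e$) applied to the length-two relation $g_i g_i=e$, i.e.\ the paper's observation that $g_{-i}=g_i$ turns the edge identity $j_i j_{-i}=k_i k_{-i}$ into $j_i^2=k_i^2$, whence $j_i=k_i$ by positivity, the converse being trivial. Your remark that one never needs the square closed walks here (unlike in the $\mathbb{Z}^d$ case) is a small but accurate simplification of the paper's ``same proof as Corollary for $\mathbb{Z}^d$'' phrasing.
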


\begin{proof}
The proof is the same as Corollary \ref{res-16}'s one. But this time $g _{ -i}=g_i,$ so that $j_i j _{ -i}=k_ik _{ -i}$ is equivalent to $j_i^2=k_i^2.$
\end{proof}

\begin{comment}
\subsubsection*{Random walks with jumps of amplitude one or two}

Following the proof of Corollary \ref{res-16}, it is easy to prove the following results.

\begin{corollary}
We look at random walks on $ \mathbb{Z}$ with different rules of jumps.
\begin{enumerate}
\item We consider  random walks on $\mathbb{Z}$ seen as the  Cayley graph generated by $+1$ and $+2$. 
Two time-homogeneous translation-invariant jump intensities $k$ and $j$  generate the same bridges if and only if 
$$ j^2_1/j_2= k^2_1/k_2$$
with an obvious notation.
\item We consider  random walks on $\mathbb{Z}$ seen as the  Cayley graph generated by $-1$ and $+2$.
Two time-homogeneous translation-invariant jump intensities $k$ and $j$  generate the same bridges if and only if 
$$ j^2_{-1}j_2= k^2_{-1}k_2 $$
with an obvious notation.
\end{enumerate}
\end{corollary}
\end{comment}

\appendix

\section{Shared bridges}\label{sec-A}

Recall that the reciprocal class is given by
 \begin{equation}\label{eq-x05}
 \Rec(j):= \left\{\sum _{ x,y\in\XX} \pi(x,y)\,R ^{ xy}; \pi\in\PXX: \supp\pi\subset\supp R _{ 01}\right\} \subset \PO.
\end{equation}
 Clearly, if $P \in \Rec(j)$ shares its bridges with $R$. Next proposition gives  assertions which are equivalent to this property. It is crucial for the proof of Theorem \ref{res-08}.
 
  \begin{proposition}\label{res-02}
For any random walks $P,R\in\PO$ such that $\supp P _{ 01}\subset\supp R_{ 01},$ the following assertions are equivalent.
\begin{enumerate}[(a)]
\item
$P = \Rec(j)$.
\item
$P ^{ xy}=R ^{ xy}$ for all $(x,y)\in\supp P _{ 01}.$
\item
There exists a measurable function $k:\XXX\to [0,\infty)$ such that $\IXX k\, dR _{01}=1$ and 
\begin{equation*}
P=k(X_0,X_1)\,R.
\end{equation*}
\item
There exists a measurable function $h:\XXX\to [0,\infty)$ such that  for all $x\in \supp P_0,$ 
\begin{equation*}
P^x=h(x,X_1)\,R^x.
\end{equation*}
\item For all $x \in \supp P_0$, $P^x \in \Rec(j)$.
\end{enumerate}
Moreover, the function $h$ at (d) is given by $h(x,y)=\displaystyle{ \frac{dP_1^x}{dR_1^x}(y)}$.

\end{proposition}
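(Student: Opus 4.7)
The plan is to establish a cycle of implications exploiting the fact that, because $\XX$ is countable, every bridge $P^{xy}$ is defined pointwise (rather than only $P_{01}$-a.e.) on $\supp P_{01}$, so the disintegration $P=\sum_{(x,y)\in\supp P_{01}} P_{01}(x,y)\,P^{xy}$ (and the analogue for $R$) holds without any almost-everywhere precaution. The backbone of all manipulations is the identity
\begin{equation*}
R^{xy} = R_{01}(x,y)^{-1}\,\mathbf{1}_{\{X_0=x,\,X_1=y\}}\,R,\qquad (x,y)\in\supp R_{01},
\end{equation*}
valid in the discrete setting.

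\textbf{Step 1: (a)$\Leftrightarrow$(b).} If $P=\sum \pi(x,y) R^{xy}$, reading off the endpoint marginal gives $\pi = P_{01}$, and uniqueness of the regular conditional distribution on a discrete space forces $P^{xy}=R^{xy}$ on $\supp P_{01}$. Conversely, (b) combined with the disintegration formula yields $P=\sum P_{01}(x,y) R^{xy}\in\Rec(j)$ thanks to the hypothesis $\supp P_{01}\subset\supp R_{01}$.

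\textbf{Step 2: (a)$\Leftrightarrow$(c).} From the identity above, setting $k(x,y):=P_{01}(x,y)/R_{01}(x,y)$ on $\supp R_{01}$ (and zero elsewhere) converts the mixture representation of (a) into $P=k(X_0,X_1)R$, with $\int k\,dR_{01}=1$. Conversely, if $P=k(X_0,X_1)R$, then on each event $\{X_0=x,X_1=y\}$ the density is a constant, so conditioning gives $P^{xy}=R^{xy}$, i.e.\ (b).

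\textbf{Step 3: (c)$\Leftrightarrow$(d).} Given (c), computing $P_0(x)=R_0(x)\,E_{R^x}[k(x,X_1)]$ and conditioning on $\{X_0=x\}$ yields
\begin{equation*}
\frac{dP^x}{dR^x}=\frac{k(x,X_1)}{E_{R^x}[k(x,X_1)]}=:h(x,X_1),
\end{equation*}
proving (d). Conversely, from $P^x=h(x,X_1)R^x$ and $P_0(x)R^x=(P_0(x)/R_0(x))\,\mathbf{1}_{\{X_0=x\}}R$, summing over $x\in\supp P_0$ produces $P=k(X_0,X_1)R$ with $k(x,y):=(P_0(x)/R_0(x))\,h(x,y)$.

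\textbf{Step 4: (d)$\Leftrightarrow$(e), and the formula for $h$.} Since the initial marginal of $P^x$ is $\delta_x$, the equivalence (c)$\Leftrightarrow$(a) applied to $P^x$ (with reference $R^x$) is precisely the statement that $P^x\in\Rec(j)$ iff $P^x$ has a density depending only on $X_1$, which is (d). Finally, projecting the identity $P^x=h(x,X_1)R^x$ onto $X_1$ gives $P^x_1(y)=h(x,y)R^x_1(y)$, whence $h(x,y)=dP^x_1/dR^x_1(y)$ on $\supp P^x_1$.

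The only genuinely delicate points are the bookkeeping of normalizing constants in Steps 2--3 and the justification that the discrete structure legitimizes the pointwise identity $R^{xy}=R_{01}(x,y)^{-1}\mathbf{1}_{\{X_0=x,X_1=y\}}R$; beyond this the argument is a cycle of purely measure-theoretic rewritings, with no analytic subtleties.
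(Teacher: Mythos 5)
Your proposal is correct and follows essentially the same route as the paper: a cycle of implications through the discrete disintegration of $P$ and $R$ over endpoints, with the same explicit densities ($k=dP_{01}/dR_{01}$, $h(x,\cdot)=k(x,\cdot)/E_{R^x}k(x,X_1)$, and the identification $h(x,y)=dP^x_1/dR^x_1(y)$); the only difference is a harmless reorganization of which implications close the loop. Nothing further is needed.
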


\begin{proof}\ 
$[(a)\Leftrightarrow (b)]:$\ Take $ \pi=P _{ 01}.$\\
$[(b)\Rightarrow (c)]:$\ Take $k=dP _{01}/dR _{01}.$\\
$[(c)\Rightarrow (d)]:$\ Take $h(x,y)=k(x,y)/E _{R^x}k(x,X_1).$
\\
$[(d)\Rightarrow (b)]:$\ As $\supp P _{01}\subset\supp R _{01},$ we have $P ^{ xy}= {\displaystyle \frac{h(x,X_1)}{E _{ R ^{ xy}}h(x,X_1)}\,R ^{ xy}}=R ^{ xy}$ for all $(x,y)\in \supp P _{ 01}.$
\\
$[(d)\Rightarrow (e)]:$\ If $(d)$ holds, then $P^{x} = \sum_{\tilde{x},y \in \XXX} \frac{h(x,y)}{R_0(x)} \mathbf{1}_{x = \tilde{x}} R^{\tilde{x},y} $. This decomposition shows that $P^x \in \Rec(j)$.
\\
$[(d)\Rightarrow (e)]:$ Fix $x \supp P_0$. If $P^x \in \Rec[j]$ then there exist some $\pi$ such that $P$ writes as in \ref{eq-x05}. We can w.l.o.g. assume that $\pi(\tilde{x},y) =0$ whenever $\tilde{x} \neq x$. But then
\bes
P^x = \sum_{y \in \XX} \pi(x,y) R^{xy} = \sum_{y \in \XX} \frac{\pi(x,y)}{R^x_1(y)} \mathbf{1}_{X_1=y} \ R^{x}
\ees
which shows (d), with $h(x,y) = \frac{\pi(x,y)}{R^x_1(y)}$.
\end{proof}

The identity in (d) expresses that $P^x$ is an $h$-transform of $R^x$ in the sense of Doob \cite{Doob57}.

The proofs of Theorem \ref{res-06} and Corollary \ref{res-12} rely on a classical result in the theory of reciprocal processes  which we recall below at Proposition \ref{res-05}. It is a  consequence of the reciprocal property, see Definition \ref{def-10}.

\begin{proposition}\label{res-05}
For any random walk $P\in\PO$, the following statements are equivalent.
\begin{enumerate}[(a)]
\item
$P \in \Rec(j)$.
\item
 For all $0\le s\le t\le1,$  we  have
\begin{equation}\label{eq-03}
P(X _{ [s,t]} \in\cdot\mid X_s,X_t)=R(X _{ [s,t]}\in\cdot\mid X_s,X_t),\quad P \as 
\end{equation}
\end{enumerate}
\end{proposition}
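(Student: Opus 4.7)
The plan is to handle the two implications separately, using as the common technical core the fact that the Markov reference measure $R$ satisfies the reciprocal identity: for every $0 \le s \le t \le 1$,
\begin{equation*}
R(X_{[s,t]} \in \cdot \mid X_0, X_s, X_t, X_1) = R(X_{[s,t]} \in \cdot \mid X_s, X_t), \quad R\as
\end{equation*}
This is a direct consequence of two applications of the Markov property of $R$: one at time $s$ (to erase the dependence on $X_0$) and one at time $t$ (to erase the dependence on $X_1$). Conditioning further on $(X_0, X_1) = (x,y)$ yields
\begin{equation*}
R^{xy}(X_{[s,t]} \in \cdot \mid X_s, X_t) = R(X_{[s,t]} \in \cdot \mid X_s, X_t).
\end{equation*}

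For (a)$\Rightarrow$(b), I would write $P = \sum_{x,y} \pi(x,y) R^{xy}$ and, for any event $A$ depending only on $X_{[s,t]}$ and any $u,v \in \XX$, expand
\begin{equation*}
P(X_{[s,t]} \in A, X_s = u, X_t = v) = \sum_{x,y} \pi(x,y)\, R^{xy}(X_{[s,t]} \in A, X_s = u, X_t = v).
\end{equation*}
The identity above for $R^{xy}$ allows me to factor out $R(X_{[s,t]} \in A \mid X_s = u, X_t = v)$ from the sum, and the remaining quantity $\sum_{x,y} \pi(x,y) R^{xy}(X_s = u, X_t = v)$ collapses to $P(X_s = u, X_t = v)$, giving (b). For the reverse direction, I would specialise (b) to $s = 0$ and $t = 1$: the result reads $P^{xy} = R^{xy}$ for every $(x,y) \in \supp P_{01}$. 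Disintegrating $P = \sum_{(x,y) \in \supp P_{01}} P_{01}(x,y)\, P^{xy}$ then exhibits $P$ as the mixture $\sum_{x,y} P_{01}(x,y)\, R^{xy}$, which lies in $\Rec(j)$ with mixing measure $\pi := P_{01}$; the support constraint $\supp \pi \subset \supp R_{01}$ required in \eqref{eq-x05} is automatic since, under Assumption \ref{as-01} together with $\supp R_0 = \XX$, the endpoint marginal $R_{01}$ has full support on $\XX\times\XX$.

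The only genuine obstacle is the reciprocal identity for $R$ used in (a)$\Rightarrow$(b); it is standard that Markov implies reciprocal, but one must write the two-step Markovian conditioning carefully, separating the pre-$s$ and post-$t$ pieces. Beyond that, the argument reduces to elementary manipulations of the mixture decomposition and of the definition of conditional probability as a ratio. Because $\XX$ is countable there are no measurability subtleties: every conditional probability appearing in the statement is an honest ratio on the (discrete) support of the conditioning event, so the almost-sure qualifiers can be read as pointwise identities on the relevant supports.
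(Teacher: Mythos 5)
Your proposal is correct and follows essentially the same route as the paper: the key fact in both is that the Markov walk $R$ is reciprocal, so $R^{xy}(X_{[s,t]}\in\cdot\mid X_s,X_t)=R(X_{[s,t]}\in\cdot\mid X_s,X_t)$, after which the mixture decomposition of $P\in\Rec(j)$ gives (a)$\Rightarrow$(b), while (b)$\Rightarrow$(a) is the specialisation $s=0$, $t=1$ combined with the disintegration of $P$ over its endpoint marginal (the paper delegates this last step to Proposition \ref{res-02}, and your remark that $\supp R_{01}=\XX^2$ under the standing assumptions matches the paper's observation in Proposition \ref{res-01}). The only presentational difference is that the paper phrases the computation with bounded test functions $u(X_s,X_t)$ whereas you argue pointwise on the countable state space, which is equivalent here.
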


\begin{proof}
Statement (b) with $s=0$ and $t=1$ is nothing but the statement (b) of Proposition \ref{res-02}. Therefore,  (b) $\Rightarrow$ (a).
\\
Let us prove the converse statement: (a) $\Rightarrow$ (b). 
Let $u$ be any bounded function  on $\XXX$ and $B$ be any $X _{ [s,t]}$-measurable event. We have
\begin{eqnarray*}
E_P[P(B\mid X_s,X_t)u(X_s,X_t)]&=&
E_P[\1_B u(X_s,X_t)]\\
&=& \sum _{ x,y\in\XX}\pi(x,y)E _{ R ^{ xy}}[\1_B u(X_s,X_t)]\\
&=& \sum _{ x,y\in\XX}\pi(x,y)E _{ R ^{ xy}}[R ^{ xy}(B\mid X_s,X_t) u(X_s,X_t)]\\
&=& \sum _{ x,y\in\XX}\pi(x,y)E _{ R ^{ xy}}[R (B\mid X_s,X_t) u(X_s,X_t)]\\
&=&E_P[R(B\mid X_s,X_t)u(X_s,X_t)]
\end{eqnarray*}
which implies the announced result. We have used the reciprocal property of $R$ at the last but one equality.
\end{proof}

\begin{remarks}\label{rem-01}
\ \begin{enumerate}[(a)]
\item
Any Markov walk is reciprocal, but the converse fails.  
\item
 Any  bridge of a reciprocal  walk is Markov. 
 \item
For any   reciprocal walk $R$ and any $ \pi\in\PXX$ such that $\supp\pi\subset\supp R _{ 01},$ the mixture of bridges \eqref{eq-x05}
is also a reciprocal walk. 
\item
In particular, the reciprocal class of a Markov measure (for instance $\Rec(j)$) consists of reciprocal measures.
\end{enumerate}
\end{remarks}
For detail about reciprocal path measures, see \cite{LRZ12} and the references therein.

\section{Closed walks}\label{sec-B}

Closed walks are necessary to define the closed walk component $\cchi_c$ of the reciprocal characteristic, see Definition  \ref{def-01}-b.

\begin{definitions}[Walk, closed walk, simple closed walk and gradient] \label{def-02}
Let $ \AA\subset\XXX$ specify a directed graph $(\XX,\to)$ on $\XX$ satisfying Assumption \ref{as-03}.
\begin{enumerate}[(a)]
%\item A subset  $ \AA\subset\XXX$ specifies a directed graph $(\XX,\to)$ for the relation: $\zz\overset{ \mathrm{def}}\iff (z,z')\in \AA.$ Any $(z,z')\in \AA$  is called the \textit{arc} from $z$ to $z'$ and is often denoted  by  $(z \to z')$.   

\item
For any $n\ge1$ and $x_0,\dots,x_n\in\XX$ such that $x_0\to x_1$, $x_1\to x_2,\ \cdots,\ x _{ n-1}\to x_n$, the ordered sequence
 $(x_0,x_1,\dots,x_n)$ is called a \emph{walk}.
We adopt the more appealing notation $	%\begin{equation}
\ww=(x_0\to x_1 \to \cdots\rightarrow x_n).
$	%\end{equation}
The  length $n$ of $\ww$  is denoted by $|\ww|.$
\item If $w=(x_{0} \to x_1 \to \cdots \to x_n)$ is a walk, then $w^*$ is the walk obtained by reverting the orientation of all arcs:
\begin{equation}\label{eq-60}
(x_n \to x_{n-1} \to \cdots \to x_{0}).
\end{equation}

\item
When $x_n=x_0$,  the  walk $\cc=(x_0\to x_1 \to \cdots\rightarrow x_n=x_0)$ is said to be \textit{closed}. 
\item
A closed walk $\cc=(x_0\to x_1 \to \cdots\rightarrow x_n=x_0)$ is said to be simple if the cardinal  of the visited vertices $ \left\{x_0,x_1,\dots, x _{ n-1}\right\} $ is equal to the length $n$ of the walk. This means that a simple closed walk cannot be decomposed into several closed walks.
\item
An arc function $\ell: \AA\to\RR$ is  the \emph{gradient of the  vertex function} $\psi:\XX\to\RR$ if
\begin{equation}\label{eq-24}
\ell(\zz)=\psi(z')-\psi(z),\quad \forall (\zz)\in \AA.
\end{equation}
For any arc function $\ell$, we denote  
$	%\begin{equation*}
\ell(\ww):=\ell(x_0\rightarrow x_1)+\cdots+\ell(x _{n-1}\rightarrow x_n).
$
\end{enumerate}
\end{definitions}

\begin{lemma}\label{res-07}

Let $(\XX,\to)$ satisfy Assumption \ref{as-03} and let $\mathcal{T}$ be a tree and $\mathcal{C}$ be a $\mathcal{T}$-basis of closed walks as in Definition \ref{defs-01}.
The following assertions are equivalent.
\begin{enumerate}[(a)]
\item The arc function $\ell: \AA\to\RR$ is a gradient in the sense of \eqref{eq-24}.
\item For \textit{any} closed walk $\cc$, $\ell(\cc)=0$.
\item For all closed walks in $\mathcal{C}$, $ \ell(\cc )=0 $.

\end{enumerate}
\end{lemma}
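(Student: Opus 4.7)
The plan is to establish the cyclic chain $(a) \Rightarrow (b) \Rightarrow (c) \Rightarrow (a)$, with the essential work happening in $(c) \Rightarrow (a)$.

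The implication $(a) \Rightarrow (b)$ is a one-line telescoping argument: if $\ell(\zz) = \psi(z') - \psi(z)$ and $\cc = (x_0 \to \cdots \to x_n = x_0)$, then
\begin{equation*}
\ell(\cc) = \sum_{i=0}^{n-1} [\psi(x_{i+1}) - \psi(x_i)] = \psi(x_n) - \psi(x_0) = 0.
\end{equation*}
The implication $(b) \Rightarrow (c)$ is immediate since $\mathcal{C}$ is a set of closed walks.

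For $(c) \Rightarrow (a)$, I would fix an arbitrary root $x_* \in \XX$ and use the tree $\mathcal{T}$ to define the candidate potential. For every $z \in \XX$, there is a unique simple walk $(x_* = y_0 \to y_1 \to \cdots \to y_k = z)$ in $\mathcal{T}$ (this uses connectedness of the graph and the spanning property of $\mathcal{T}$); set $\psi(x_*) := 0$ and
\begin{equation*}
\psi(z) := \sum_{i=0}^{k-1} \ell(y_i \to y_{i+1}).
\end{equation*}
A preliminary observation that I will use repeatedly is that since $\mathcal{E} \subset \mathcal{C}$, assumption (c) applied to the closed walk $(z \to z' \to z)$ yields $\ell(z \to z') + \ell(z' \to z) = 0$, so $\ell$ is antisymmetric on arcs. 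This makes the value of $\psi$ independent of the orientation in which we traverse the tree.

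The remaining task is to verify $\ell(z \to z') = \psi(z') - \psi(z)$ for every arc $(z \to z')$. For arcs $(z \to z') \in \mathcal{T}$, this follows directly from the construction of $\psi$ via the lowest-common-ancestor decomposition in $\mathcal{T}$, combined with the antisymmetry established above. For arcs $(z \to z') \notin \mathcal{T}$, I will use the definition of a $\mathcal{T}$-basis: either $\mathbf{f}_{z \to z'} \in \mathcal{C}$ or $\mathbf{f}_{z' \to z} \in \mathcal{C}$. In the first case, $\mathbf{f}_{z \to z'}$ is $(z \to z')$ concatenated with the unique simple $\mathcal{T}$-walk from $z'$ back to $z$, and the $\ell$-sum along that return walk is exactly $\psi(z) - \psi(z')$ (again via the common ancestor computation and antisymmetry); hence $0 = \ell(\mathbf{f}_{z \to z'}) = \ell(z \to z') + \psi(z) - \psi(z')$, which rearranges to the desired identity. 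The second case is handled symmetrically by first computing $\ell(z' \to z)$ and then negating.

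The step I expect to require the most care is this last verification for non-tree arcs, specifically the bookkeeping that shows the $\ell$-sum along the $\mathcal{T}$-walk from $z'$ to $z$ equals $\psi(z) - \psi(z')$. This hinges on writing both $\mathcal{T}$-walks $x_* \leadsto z$ and $x_* \leadsto z'$ as a common prefix up to their lowest common ancestor $v$ in $\mathcal{T}$ followed by disjoint continuations, then recombining these continuations (one of which must be reversed, whence antisymmetry of $\ell$ enters) to reconstruct the $\mathcal{T}$-walk from $z'$ to $z$. Once this identity is in place, the remainder of the argument is bookkeeping.
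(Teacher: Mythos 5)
Your proof is correct, but it is organized differently from the paper's. You close the equivalence through the chain $(a)\Rightarrow(b)\Rightarrow(c)\Rightarrow(a)$, doing the real work in a direct proof of $(c)\Rightarrow(a)$: you define the potential $\psi$ by summing $\ell$ along the unique simple $\mathcal{T}$-walk from a fixed root, get antisymmetry of $\ell$ from $\mathcal{E}\subset\mathcal{C}$, verify the gradient identity on tree arcs, and then handle each non-tree arc by evaluating $\ell$ on the fundamental closed walk $\mathbf{f}_{z\to z'}$ or $\mathbf{f}_{z'\to z}$ supplied by the $\mathcal{T}$-basis. The paper instead proves $(b)\Rightarrow(a)$ by defining $\psi(x):=\ell(\ww)$ along an \emph{arbitrary} walk from the root (well-definedness being exactly where $(b)$ is used), and separately proves $(c)\Rightarrow(b)$ by induction on the number of arcs of a closed walk not in $\mathcal{T}$, splicing in the tree walk, using antisymmetry and the basis element $\ff_{x_{n-1}\to x_0}$ to reduce the count. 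Your route is shorter for the equivalence itself: well-definedness of $\psi$ is automatic from uniqueness of simple tree walks, and no induction is needed (note that once the gradient identity holds on tree arcs, the claim that the $\ell$-sum along the $\mathcal{T}$-walk from $z'$ to $z$ equals $\psi(z)-\psi(z')$ is just telescoping, so your lowest-common-ancestor bookkeeping can be bypassed). The paper's route has the advantage of isolating the intermediate statement $(c)\Rightarrow(b)$ — vanishing on the basis forces vanishing on \emph{all} closed walks — which is the form of the lemma actually invoked in Step 2 of the proof of Theorem \ref{res-08}; with your organization that statement is still available, but only by composing $(c)\Rightarrow(a)\Rightarrow(b)$.
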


\begin{proof} 
$(a )\Rightarrow (b ).$ \,This is standard. If $\ell$ is the gradient of $\psi,$ then $\ell(\ww)=\psi(x_n)-\psi(x_0)$, which vanishes when $\ww=(x_0\to\cdots\to x_n)$ is a closed walk.\\
$(b) \Rightarrow (a).$\,
 Let $\ell$ be such that $\ell(\cc)=0$ for all closed walks $\cc.$ As $(z\rightarrow z'\rightarrow z)$ is a closed walk, we have 
\begin{equation}\label{eq-23}
\ell(z\rightarrow z')+\ell(z'\rightarrow z)=0,\quad \forall z\leftrightarrow z'\in\XX.
\end{equation}
Choose a tagged vertex $\ast\in \XX,$
set $\psi(\ast)=0$ and for any $x\not=\ast,$ define
$$
\psi(x):=\ell(\ww),\quad \textrm{ for any }\ww\in \left\{(\ast\rightarrow x_1\rightarrow \cdots\rightarrow x_n=x),  \textrm{ for some } n\ge1\right\}.
$$ 
To see that this is a meaningful definition, take two paths $\ww=(\ast\rightarrow x_1\cdots\rightarrow x_n)$ and $\ww'=(\ast\rightarrow y_1\cdots\rightarrow y_m)$ such that $x_n=y_m=x$. As $(\ast\rightarrow x_1\cdots\rightarrow x_n=x=y_m\to y _{m-1}\rightarrow\cdots\rightarrow \ast)$ is a closed walk, we have $0=\ell(\ast\rightarrow x_1\cdots\rightarrow x)+\ell(x\rightarrow y _{m-1}\to\cdots\rightarrow \ast)
	=\ell(\ww)-\ell(\ww'),$ where the last equality is obtained with \eqref{eq-23}. Therefore, $ \psi$ is well defined.
Finally, it follows immediately from our definition of $\psi$ that $\ell(\zz) = \psi(z')- \psi(z),$ for all $(\zz)$.
\\
$(b) \Leftarrow (c).$ \,
Consider an arbitrary closed walk $(x_0 \to x_1 \to .. \to x_{n-1} \to x_n=x_0)$. The proof is by induction on the number of arcs which are not in $\mathcal{T}$. We can w.l.o.g. assume that $x_{n-1} \to x_n $ is one of such arcs. Because of the fact that $\mathcal{T}$ is a spanning tree, there exist a directed walk $\ww$ from $x_{n-1}$ to $x_0$ which uses only arcs in $\mathcal{T}$. Then the closed walk $\ff$ obtained by concatenating $x_0 \to ..\to x_{n-1}$ and $\ww$ s a closed walk which uses strictly less arcs not in $\mathcal{T}$ than $\cc$. Moreover, the walk obtained concatenating $x_{n-1} \to x_0$ with $\ww^*$ belongs to $\mathcal{C}$, by definition of $\mathcal{C}$. Indeed, such a walk is precisely $\ff_{x_{n-1} \to x_0}$, see Definition \ref{defs-01}. By definition of $\mathcal{C}$ we have that either $\ff_{x_{n-1} \to x_0 } \in \mathcal{C}$ or $\ff_{x_{0} \to x_{n-1} }\in \mathcal{C}$ . W.l.o.g. we can assume that the first holds (the other case can be analyzed similarly by using the fact that $\mathcal{E} \subseteq \mathcal{C}$). But then,
\begin{alignat*}{1}
\ell(\cc) &= \ell(\ff) \underbrace{- \ell(\ww)}_{= \ell(\ww^*)} + \ell(x_{n-1} \to x_{n}) = \ell(\ff) + \ell(\ww^*) + \ell(x_{n-1} \to x_{n})\\
& = \underbrace{\ell(\ff)}_{=0 \, \textrm{ by induction}} + \underbrace{\ell(\ff_{ x_{n-1} \to x_0})}_{\mathbf{f}_{x_{n-1} \to x_0}\in \mathcal{C}} =0
\end{alignat*}
where to conclude that $\ell(\ww^* ) = -\ell(\ww)$ we used the fact that $\mathcal{E} \subset \mathcal{C}$.
The proof is now complete.\\
$(c) \Rightarrow (a)$ is obvious.
\end{proof}

\end{document}